\documentclass[11pt]{article}
\usepackage{amsfonts}
\usepackage{amsfonts}
\usepackage{amsfonts}
\usepackage{amsfonts}
\usepackage{amsfonts}
\usepackage{amsfonts}
\usepackage{amsfonts}
\usepackage{amsfonts}
\usepackage{amsfonts}
\usepackage{amsfonts}
\usepackage{amsfonts}
\usepackage{amsfonts}
\usepackage{amsfonts}
\usepackage{amsfonts}
\usepackage{amsfonts}
\usepackage{amsfonts}
\usepackage{amsfonts}
\usepackage{mathrsfs}
\usepackage{amssymb,mathrsfs,amsmath,amsthm,amsfonts}
\usepackage{graphicx}
\usepackage[pdfstartview=FitH]{hyperref}
\setcounter{MaxMatrixCols}{30}
\providecommand{\U}[1]{\protect\rule{.1in}{.1in}}
\allowdisplaybreaks 

\newtheorem{theorem}{Theorem}[section]
\newtheorem{lem}{Lemma}[section]
\newtheorem{prp}[theorem]{Proposition}
\newtheorem{thm}[theorem]{Theorem}

\newtheorem{dfn}[theorem]{Definition}

\newtheorem{remark}{Remark}[section]

\numberwithin{equation}{section}

\footskip 1cm \topmargin 0cm \oddsidemargin 0.25cm \evensidemargin
0.25cm \marginparwidth 1cm
 \textwidth 155mm \textheight 225mm

\topmargin -10mm

\title{\bf{Exponential Convergence for Semilinear SDEs Driven by L\'evy Processes on Hilbert Spaces }}
\author{\ Yulin Song$^{1}$\\
{\em\small School of Mathematical Sciences, Beijing Normal University, Beijing, {\rm 100875}, P.R.China}\\
\ Tiange Xu$^{2}$\\
{\em\small School of Mathematics, University of CAS, Beijing, {\rm 100049}, P.R.China}}


\date{}
\begin{document}
\footnotetext[1]{ Corresponding author. Email: songyl@amss.ac.cn.}
\footnotetext[2]{ The author is financially supported by NFS (Grant No. 11101419). Email: xutiange@ucas.ac.cn.}
\maketitle

\begin{abstract}
\noindent In this paper, an integration by parts formula was derived for jump processes on Hilbert spaces. Using this formula, we
investigated derivative formula and exponential convergence for semilinear SDEs driven by purely jump processes on Hilbert spaces.

\vskip0.5cm \noindent{\bf Keywords:} jump processes; integration by parts formula; derivative formula; exponential convergence.\vspace{1mm}\\
\noindent{{\bf MSC 2000:} 60J75; 60J45}
\end{abstract}
\section{Introduction}
In recent years, integration by parts formula, derivative formula and exponential convergence have been extensively studied for stochastic
differential equations with jumps. The topics on the two above formulas for jump processes on $\mathbb{R}^d$, we refer to \cite{Bally2007},
\cite{Bass1986}, \cite{Dong2012}, \cite{Norris1988}, \cite{Song2012}, \cite{Takeuchi2010},\cite{Wang2011a},\cite{Wang2011b}, \cite{ZhangXC}
and so on. In the finite-dimensional case, the authors investigated the coupling property
for linear SDEs in \cite{Bottcher2011}, \cite{Schilling2011}, \cite{Schilling2011a}, \cite{Schilling2012} and references within.
For nonlinear SDEs driven by jump processes, coupling property was derived in \cite{Song2012}. In most of references mentioned above
the shift-invariance of the Lebesgue measure plays an essential role. But in infinite-dimensional setting, there is no Lebesgue measure available.
The authors in \cite{Wang2011c} investigated the strong Feller and coupling properties for transition semigroups of linear SDEs driven by
L\'evy processes on a Banach space equipped with a nice reference measure, which has quasi-invariance property.
Exponential convergence for SDEs driven by L\'evy processes was studied in  \cite{Kulik2009},
\cite{Priola2012},\cite{JWang2008} and references therein for finite-dimensional case. Ergodicity and exponential mixing properties of SPDEs
driven by cylindrical stable processes were derived in \cite{Priola2012}, \cite{Priola2011a} and \cite{Priola2011b}.
In this paper we aim to investigate integration by parts formula, derivative formula and exponential convergence for semilinear SDEs driven by
non-cylindrical purely jump processes on a separable Hilbert space.

Let $(\mathbb{H}, \langle\cdot,\cdot\rangle)$ be a separable Hilbert space and $\mu$ be a Gaussian measure on $\mathbb{H}$
with covariance operator $Q$, which is nonnegative, symmetric and has finite trace. Its square root, denoted by $Q^{\frac{1}{2}}$,
is a nonnegative and symmetric Hilbert-Schmidt operator.
Let Im$Q^{\frac{1}{2}}$ be the image space of $Q^{\frac{1}{2}}$, i.e., Im$Q^{\frac{1}{2}}=\{Q^{\frac{1}{2}}x|x\in \mathbb{H}\}$. As is known,
Im$Q^{\frac{1}{2}}$ is a Hilbert space with the induced inner product
\begin{align*}
\langle x,y\rangle_0:=\langle Q^{-\frac{1}{2}}x,Q^{-\frac{1}{2}}y\rangle,~~x,y \in \text{Im}Q^{\frac{1}{2}},
\end{align*}
where $Q^{-\frac{1}{2}}$ is the pseudo inverse of $Q$ in the case that it is not one-to-one, that is, for $h\in$Im$Q^{\frac{1}{2}}$,
\begin{align*}
Q^{-\frac{1}{2}}h=x,\text{ if }Q^{\frac{1}{2}}x=h\text{ and }\|x\|=\inf\{\|y\|:Q^{\frac{1}{2}}y=h\}.
\end{align*}
Equivalently, we also have
\begin{align*}
\langle x,y\rangle_0=\sum_{k=1}^\infty\frac{\langle x,e_k\rangle\langle y,e_k\rangle}{\lambda_k}I_{[\lambda_k>0]},
\end{align*}
where $\{e_k\}_{k\in\mathbb{N}}$, the eigenvectors of $Q$ with eigenvalues $\{\lambda_k\}_{k\in \mathbb{N}}$, consists of an orthonormal basis
of $\mathbb{H}$. The space (Im$Q^{\frac{1}{2}}$,$\langle\cdot,\cdot\rangle_0$) is called the Reproducing Kernel Hilbert Space of $\mathbb{H}$.
As is known, the Gaussian measure $\mu$ has quasi-invariant property under the shift $z\mapsto z+h$ for any $h\in$Im$Q^{\frac{1}{2}}$, i.e.,
$\mu(\cdot+h)$ and $\mu$ are mutually absolutely continuous. The Randon-Nikodym derivative of $\mu(\cdot+h)$ with respect to $\mu$ is
\begin{align*}
\varphi(z,h):=\frac{\mu(dz+h)}{\mu(dz)}=\exp\{\langle h,z\rangle_0-\frac{1}{2}\langle h,h\rangle_0\},~~\mu-a.s.
\end{align*}

For $i=1,2$, let $W_i$ be the space of all c$\grave{a}$dl$\grave{a}$g functions from $[0,\infty)$ to $\mathbb{H}$ vanishing at 0,
which is endowed with the Skorohod topology and the probability measure $\mathbb{P}^i$ such that the coordinate
process $L^i_t(w_i)=w_i(t)$ is a L\'evy process. Furthermore, we assume that $L^1:=\{L^1_t\}_{t\geq0}$ is a purely
jump process with L\'evy measure $\rho(z)\mu(dz)$, where $\rho:\mathbb{H}\rightarrow(0,\infty)$ is a differentiable function
satisfying $\int_\mathbb{H}(|z|^2\wedge1)\rho(z)\mu(dz)<\infty$.

Consider the following product probability space
\begin{align*}
(\Omega, \mathscr{F}, \mathbb{P}):=(W_1\times W_2,\mathscr{B}(W_1)\times\mathscr{B}(W_2), \mathbb{P}^1\times \mathbb{P}^2)
\end{align*}
and define
\begin{align*}
L_t=L^1_t+L^2_t.
\end{align*}
That is, for $w=(w_1,w_2)\in\Omega$,
\begin{align*}
L_t(w)=w_1(t)+w_2(t).
\end{align*}
Then $\{L_t\}_{t\geq0}$ is a L\'evy process on $(\Omega, \mathscr{F}, \mathbb{P})$ with two independent parts and its L\'evy measure denoted by
$\nu$ satisfies $\nu(dz)\geq\rho(z)\mu(dz)$. Denote by $\{\mathscr{F}_t\}_{t\geq0}$ the smallest filtration generated by $\{L_t\}_{t\geq0}$.
We use $N^1$ and $\widetilde{N^1}$ to be the jump measure and martingale measure of $\{L^1_t\}_{t\geq0}$.
Let $\mathbb{E}$ and $\mathbb{E}^1$ be the associated expectations of $P$ and $P^1$ respectively.

In this paper, we consider the following stochastic equation on $\mathbb{H}$
\begin{equation}\label{2}
\begin{cases}
dX_t=AX_tdt+F(X_t)dt+dL_t,\\
X_0=x,
\end{cases}
\end{equation}
where $A: \mathcal{D}(A)\subset\mathbb{H}\rightarrow \mathbb{H}$ is an adjoint, unbounded and linear operator generating
a $C_0$-semigroup $\{S_t\}_{t\geq0}$ on $\mathbb{H}$ and $F: \mathbb{H}\rightarrow \mathbb{H}$ is measurable and bounded.
The mild solution of Eq.($\ref{2}$), if it exists, can be formulated as
\begin{align*}
X_t=S(t)x+\int_0^tS(t-s)F(X_s)ds+\int_0^tS(t-s)dL_s,~~t\geq0.
\end{align*}
We gather here all hypotheses which will be made on Eq.($\ref{2}$).\\
({\bf{H1}}) There exists a differentiable function $\rho:\mathbb{H}\rightarrow(0,\infty)$ with bounded derivative such that
\begin{align*}
\lambda:=\int_\mathbb{H}\rho(z)\mu(dz)<\infty \text{ and } \int_\mathbb{H}|z|^2\rho(z)\mu(dz)<\infty.
\end{align*}
({\bf{H2}}) Im$S(t)\subset$Im$Q$ holds for any $t>0$.\\
({\bf{H3}}) $A$ is a dissipative operator defined by
\begin{align}
A=\sum\limits_{k\geq1}(-\gamma_k)e_k\otimes e_k,
\end{align}
for $0<\gamma_1\leq\gamma_2\leq\cdots\leq\gamma_k\leq\cdots$ and $\gamma_k\rightarrow\infty$ as $k\rightarrow\infty$.\\
({\bf{H4}}) $F:\mathbb{H}\rightarrow\mathbb{H}$ is a bounded and Lipschitz continuous function with the smallest Lipschitz constant $\|F\|_{Lip}$.

Let $\mathscr{B}_b(\mathbb{H})$ be the class of all bounded measurable function on $\mathbb{H}$.
We use $C_b^2(\mathbb{H})$ to denote the family of $C^2$ real-valued functions $f$ such that $f$ and its derivatives of order up to 2 are bounded.
$\|\cdot\|$, $\|\cdot\|_{\infty}$ and $\|\cdot\|_{Var}$ stand for the operator norm, uniform norm and total variation norm respectively.
Denote the associated jump measure of $L^1$ by $N^1$ and the counting process by $N^1_t$, that is,
$N^1_t:=N^1([0,t]\times\mathbb{H})$.
We aim to derive the Bismut type formula for semigroups $\{P_t^1\}_{t\geq0}$ and the exponential convergence for $\{P_t\}_{t\geq0}$
defined as
\begin{align}
P^1_tf(x):=\mathbb{E}\{f(X_t^x)I_{[N^1_t\geq1]}\},~~~~
P_tf(x):=\mathbb{E}f(X_t^x),~~~~x\in\mathbb{H},~t\geq0,~f\in\mathscr{B}_b(\mathbb{H}).
\end{align}
In this paper, it is difficult to investigate a derivative formula for $P_t$. Fortunately, associated formula for
$P^1_t$ is derived, which is enough for our further work.

Let $J_t\xi$ be the derivative of $X^x_t$ w.r.t. the initial value $x$. Denote
\begin{align*}
C_b^1(\mathbb{H}\rightarrow\mathbb{H})=&\Big\{G:\mathbb{H}\rightarrow\mathbb{H}\big|
G\text{ is bounded, differential, with bounded} \\&\text{ ~~~~~~~~~~~~~~~~~~and continuous derivatives up to order }1.\Big\}.
\end{align*}

We have the following main results.
\begin{thm}
Assume $(\bf{H1})-(\bf{H2})$ hold. Let $F\in C^1_b(\mathbb{H}\rightarrow\mathbb{H})$ and $\nabla F$ be Lipschitz continuous.
If $\int_0^t\|Q^{-1}S(s)\|ds<\infty$, then for $f\in C^2_b(\mathbb{H})$ and $\xi\in\mathbb{H}$
\begin{align*}
\nabla_\xi P_t^1f(x)=-\mathbb{E}\Big\{f(X_t^x)\frac{I_{[N^1_t\geq1]}}{N^1_t}
\int_0^t\int_\mathbb{H}\Big(\langle z, Q^{-1}J_s\xi\rangle+\langle\nabla\log\rho(z), J_s\xi\rangle\Big)\widetilde{N^1}(dz,ds)\Big\}.
\end{align*}
\end{thm}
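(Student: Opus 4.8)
The plan is to turn the spatial derivative into an integration by parts against the jumps of $L^1$, using the quasi-invariance of the Gaussian reference measure $\mu$ encoded in the density $\varphi(z,h)$. The key structural fact is that, by $(\mathbf{H1})$, $\int_\mathbb{H}\rho\,d\mu=\lambda<\infty$, so $L^1$ is a compound Poisson process: $N^1_t$ is Poisson with parameter $\lambda t$ and, conditionally on $\{N^1_t=n\}$, the jump times $0<\tau_1<\cdots<\tau_n<t$ and the marks $z_1,\dots,z_n$ are independent, the marks being i.i.d.\ with law $\lambda^{-1}\rho(z)\mu(dz)$. I would first differentiate under the expectation, so that $\nabla_\xi P^1_tf(x)=\mathbb E\{\langle\nabla f(X^x_t),J_t\xi\rangle I_{[N^1_t\geq1]}\}$, where the Jacobian $J_t\xi=\nabla_\xi X^x_t$ solves the mild variational equation $J_t\xi=S(t)\xi+\int_0^tS(t-r)\nabla F(X_r)J_r\xi\,dr$; legitimacy of this interchange is where $F\in C^1_b$ and the Lipschitz continuity of $\nabla F$ are used. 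The whole computation is then carried out conditionally on the number and times of the jumps, reducing it to a finite-dimensional integration by parts in $(z_1,\dots,z_n)\in\mathbb H^n$.

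The key algebraic device is to realise $J_t\xi$ through the marks. Writing $J_{t,s}$ for the derivative flow solving the same variational equation started at time $s$, the additive nature of the noise makes $J$ insensitive to the jumps and yields the cocycle identity $J_{t,s}J_{s,0}=J_{t,0}=J_t$; moreover a first-order change of the mark $z_i$ (the jump at $\tau_i$) in a direction $\eta$ perturbs $X_t$ by exactly $J_{t,\tau_i}\eta$. Hence, on $\{N^1_t=n\}$, the single vector $J_t\xi$ can be written in $n$ equivalent ways and averaged,
\begin{align*}
J_t\xi=\frac1n\sum_{i=1}^n J_{t,\tau_i}\big(J_{\tau_i}\xi\big),
\end{align*}
which is the origin of the weight $I_{[N^1_t\geq1]}/N^1_t$. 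Because the times are ordered, $\eta_i:=J_{\tau_i}\xi$ depends only on $z_1,\dots,z_{i-1}$ and is therefore independent of $z_i$, so that for each $i$ the map $z_i\mapsto\langle\nabla f(X_t),J_{t,\tau_i}\eta_i\rangle$ is precisely the directional derivative in $z_i$, in the fixed direction $\eta_i$, of $z_i\mapsto f(X^x_t)$.

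It then remains to integrate by parts in each $z_i$ against $\rho(z_i)\mu(dz_i)$. Applying the integration by parts formula attached to $\mu$ (implied by the quasi-invariance $\mu(dz+h)=\varphi(z,h)\mu(dz)$) to the product $f(X^x_t)\rho(z_i)$, and using $\nabla_{\eta}\rho=\rho\,\langle\nabla\log\rho,\eta\rangle$, the inner $z_i$-integral is converted into $-\int_\mathbb H f(X^x_t)\big(\langle z_i,Q^{-1}\eta_i\rangle+\langle\nabla\log\rho(z_i),\eta_i\rangle\big)\rho(z_i)\mu(dz_i)$. Summing over $i$, dividing by $n=N^1_t$, rewriting $\frac1n\sum_i(\cdots)$ as $\frac1{N^1_t}\int_0^t\int_\mathbb H(\cdots)\,N^1(dz,ds)$, and taking expectations yields the asserted identity with the uncompensated measure $N^1$. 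Finally I would pass to the martingale measure $\widetilde N^1$: the difference is the compensator integral $\int_0^t\int_\mathbb H(\cdots)\,\rho(z)\mu(dz)\,ds$, whose inner $\mathbb H$-integral vanishes for each fixed $s$ because $\int_\mathbb H(\langle z,Q^{-1}\eta\rangle+\langle\nabla\log\rho(z),\eta\rangle)\rho\,d\mu=0$ for every fixed direction $\eta=J_s\xi$ (itself an instance of the integration by parts formula for $\mu$ applied with $g=\rho$), so that $N^1$ and $\widetilde N^1$ may be exchanged.

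I expect the main obstacle to be the infinite-dimensional integration by parts together with the integrability it demands. One must first make sense of $Q^{-1}J_s\xi$ and show that $\langle z,Q^{-1}J_s\xi\rangle\in L^2(\rho\,\mu)$, so that the jump integral is well defined and the differentiation under the expectation is justified; this is exactly where $(\mathbf{H2})$ (so that $\mathrm{Im}\,S(t)\subset\mathrm{Im}\,Q$ and $Q^{-1}S(t)$ is meaningful), the assumption $\int_0^t\|Q^{-1}S(s)\|\,ds<\infty$ (which bounds $\|Q^{-1}J_s\xi\|$ through the variational equation), and the second moment bound $\int_\mathbb H|z|^2\rho\,d\mu<\infty$ from $(\mathbf{H1})$ all enter. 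The other delicate point, characteristic of the infinite-dimensional setting, is that there is no Lebesgue measure, so the integration by parts must be run entirely through the quasi-invariance of $\mu$ and the smoothness of $\rho$ rather than through a naive translation of the marks.
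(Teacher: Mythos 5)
Your proposal is correct and reaches the stated formula, but by a genuinely different route than the paper. The paper never conditions on the compound-Poisson skeleton: it first proves a general integration by parts formula (Theorem 2.3), $\mathbb{E}\{D_Vf(L_t)\}=-\mathbb{E}\{f(L_t)M_t\}$ for all predictable $V\in\mathscr{V}_2$, via a Girsanov-type exponential martingale $Z_t^{\epsilon,w_2}$ that reweights the mark-shifted jump measure (Lemmas 2.1 and 2.2), with a three-step approximation scheme (first $\rho\geq\delta$ and $\sup_s|Q^{-1}V(s)|<\infty$, then superposition of an auxiliary Poisson noise of intensity $\frac{1}{n}\mu$ to remove the lower bound on $\rho$, then truncation of $V$); it then establishes the $L^1$-derivative $D_VX_t=\int_0^t\int_\mathbb{H}J_{st}V(s)N^1(dz,ds)$ (Proposition 3.1, which needs $F\in C^2_b$ through $\|\nabla^2F\|_\infty$), chooses $V(s)=J_s\xi$ so that $D_VX_t=N^1_tJ_t\xi$ while $D_V\{I_{[N^1_t\geq1]}/N^1_t\}=0$, and finally mollifies $F$ to relax $C^2_b$ to $C^1_b$ with Lipschitz $\nabla F$. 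Your conditional, mark-by-mark Gaussian integration by parts is the ``static'' version of the same perturbation: your averaged identity $J_t\xi=\frac1n\sum_iJ_{t,\tau_i}J_{\tau_i}\xi$ is exactly the paper's $D_VX_t=N^1_tJ_t\xi$ in disguise, your observation that $\eta_i=J_{\tau_i}\xi$ does not depend on $z_i$ plays the role of the predictability of $V$, and your pathwise vanishing of $\int_\mathbb{H}\big(\langle z,Q^{-1}\eta\rangle+\langle\nabla\log\rho(z),\eta\rangle\big)\rho\,d\mu$ (the $\mu$-integration by parts with $g=\rho$, consistent with the paper's sign convention for $\varphi$) correctly licenses the exchange of $N^1$ and $\widetilde{N^1}$, which the paper builds in by phrasing $M_t$ with the compensated measure from the start. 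As to what each approach buys: your conditioning is legitimate precisely because $({\bf H1})$ forces finite activity ($\lambda<\infty$), and it is more elementary --- it dispenses with Lemma 2.2's a priori $L^2$-bound on $(Z^{\epsilon}_t-1)/\epsilon$, with the removal of the lower bound $\rho\geq\delta$, and, since only first-order differentiability of the flow enters your computation, plausibly also with the mollification step, running directly at $F\in C^1_b$ with Lipschitz $\nabla F$; the paper's route, in exchange, yields a reusable formula valid for arbitrary bounded functionals and every $V\in\mathscr{V}_2$, not just this particular direction. The points you would still have to make rigorous are exactly where the paper spends its estimates: Gateaux differentiability of $z_i\mapsto X_t$ with derivative $J_{t,\tau_i}$ (the content of equations (3.4) and (3.10) and Proposition 3.1), the representation $Q^{-1}J_s=Q^{-1}S(s)+\int_0^sQ^{-1}S(s-r)\nabla F(X_r)J_r\,dr$ guaranteeing integrability of $\langle z,Q^{-1}J_s\xi\rangle$ under $\int_0^t\|Q^{-1}S(s)\|ds<\infty$, and the domination justifying the Cameron--Martin line-by-line integration by parts; you have correctly identified all of these as the delicate steps.
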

As a direct application of the formula, exponential convergence of the transition semigroup is derived.
\begin{thm}
Assume $(\bf{H1})-(\bf{H4})$ hold. If $\gamma_1>\|F\|_{Lip}$ and $\lim\limits_{t\rightarrow\infty}\frac{\int_0^t\|Q^{-1}S(s)\|^2ds}{t}<\infty$,
then there exists a constant $C>0$ such that
\begin{align}
\label{0}
\|P_t(x,\cdot)-P_t(y,\cdot)\|_{Var}\leq C(1+|x-y|)\exp\Big\{-\frac{\lambda(\gamma_1-\|F\|_{Lip})t}{\lambda+\gamma_1-\|F\|_{Lip}}\Big\}.
\end{align}
\end{thm}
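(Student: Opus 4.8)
The plan is to combine a synchronous coupling of the two solutions (which forces them to contract at the dissipativity rate) with the Bismut formula of the preceding theorem (which converts closeness of the states into closeness in total variation), and then to optimize over an intermediate time to recover the harmonic-mean rate $\lambda\delta/(\lambda+\delta)$, where I write $\delta:=\gamma_1-\|F\|_{Lip}>0$. First I would establish two pathwise contraction estimates. Driving $X^x$ and $X^y$ by the \emph{same} process $L$, the difference $D_r:=X^x_r-X^y_r$ has no jump part and solves $\dot D_r=AD_r+F(X^x_r)-F(X^y_r)$; hence by $({\bf H3})$ and $({\bf H4})$, $\frac{d}{dr}|D_r|^2\le-2(\gamma_1-\|F\|_{Lip})|D_r|^2$, so that $|X^x_r-X^y_r|\le e^{-\delta r}|x-y|$ almost surely. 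The same computation applied to the variational equation $\frac{d}{dr}(J_r\xi)=AJ_r\xi+\nabla F(X_r)J_r\xi$ gives $|J_r\xi|\le e^{-\delta r}|\xi|$, uniformly in the base point.

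Next I would convert the derivative formula into a gradient bound for $P^1_s$. Setting $M_s:=\int_0^s\int_\mathbb{H}\big(\langle z,Q^{-1}J_r\xi\rangle+\langle\nabla\log\rho(z),J_r\xi\rangle\big)\widetilde{N^1}(dz,dr)$, the formula together with $|f|\le\|f\|_\infty$ and the Cauchy--Schwarz inequality gives $|\nabla_\xi P^1_sf(x)|\le\|f\|_\infty\big(\mathbb{E}\frac{I_{[N^1_s\ge1]}}{(N^1_s)^2}\big)^{1/2}\big(\mathbb{E}|M_s|^2\big)^{1/2}$. By the It\^o isometry for the compensated measure, $\mathbb{E}|M_s|^2=\mathbb{E}\int_0^s\int_\mathbb{H}|\langle z,Q^{-1}J_r\xi\rangle+\langle\nabla\log\rho(z),J_r\xi\rangle|^2\rho(z)\mu(dz)\,dr$; using $({\bf H1})$ to bound $\int_\mathbb{H}|z|^2\rho\,d\mu$ and the moments of $\nabla\log\rho$, the variation-of-constants bound $|Q^{-1}J_r\xi|\le\|Q^{-1}S(r)\||\xi|+\|F\|_{Lip}|\xi|\int_0^r\|Q^{-1}S(r-u)\|e^{-\delta u}\,du$, and $|J_r\xi|\le e^{-\delta r}|\xi|$, the hypothesis $\limsup_{s}s^{-1}\int_0^s\|Q^{-1}S(u)\|^2\,du<\infty$ yields $\mathbb{E}|M_s|^2\le C|\xi|^2 s$. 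Since $N^1_s$ is Poisson with mean $\lambda s$ one has $\mathbb{E}\frac{I_{[N^1_s\ge1]}}{(N^1_s)^2}\le C(\lambda s)^{-2}$, and combining these gives the key estimate $\|\nabla P^1_sf\|_\infty\le C\|f\|_\infty s^{-1/2}$ for $s\ge s_0$, uniformly in the base point (and extended from $f\in C^2_b(\mathbb{H})$ to bounded measurable $f$ by approximation in the formula).

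Finally I would assemble the bound. Writing $\|P_t(x,\cdot)-P_t(y,\cdot)\|_{Var}=\sup_{\|f\|_\infty\le1}|P_tf(x)-P_tf(y)|$ and using the semigroup identity $P_t=P_rP_{t-r}$ for $r\in(0,t)$, I would split $P_{t-r}f=P^1_{t-r}f+(P_{t-r}-P^1_{t-r})f$. The remainder is controlled by the Poisson gap probability, $|(P_{t-r}-P^1_{t-r})f|=|\mathbb{E}\{f(X_{t-r})I_{[N^1_{t-r}=0]}\}|\le\|f\|_\infty e^{-\lambda(t-r)}$, so its contribution to $|P_tf(x)-P_tf(y)|$ is at most $2e^{-\lambda(t-r)}$. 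For $h:=P^1_{t-r}f$, the synchronous coupling and the gradient estimate give $|P_rh(x)-P_rh(y)|=|\mathbb{E}(h(X^x_r)-h(X^y_r))|\le\|\nabla h\|_\infty\,\mathbb{E}|X^x_r-X^y_r|\le C(t-r)^{-1/2}e^{-\delta r}|x-y|$. Choosing $r=\lambda t/(\lambda+\delta)$ makes $\delta r=\lambda(t-r)=\lambda\delta t/(\lambda+\delta)$, so both exponentials equal $e^{-\lambda\delta t/(\lambda+\delta)}$ while $(t-r)^{-1/2}=((\lambda+\delta)/(\delta t))^{1/2}$ stays bounded for $t\ge t_0$; treating $t<t_0$ by the trivial bound $\|\cdot\|_{Var}\le2$ and absorbing all constants into $C$ then yields $(\ref{0})$. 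The principal obstacle is the gradient estimate of the middle step: one must verify that $\mathbb{E}|M_s|^2$ grows at most linearly in $s$ — this is precisely where the averaged condition on $\|Q^{-1}S\|^2$ enters — and, equally important, that the weight $(N^1_s)^{-1}$ supplies an extra $s^{-1}$ factor, so that $\|\nabla P^1_sf\|_\infty$ genuinely \emph{decays} like $s^{-1/2}$; were the prefactor instead to grow polynomially, it would spoil the sharp exponent $\lambda\delta/(\lambda+\delta)$.
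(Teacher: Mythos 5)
Your route is the paper's own: a synchronous coupling giving $\mathbb{E}|X^x_r-X^y_r|\le e^{-\delta r}|x-y|$ with $\delta=\gamma_1-\|F\|_{Lip}$, a gradient bound for $P^1$ extracted from the derivative formula with the Poisson weight $\mathbb{E}\frac{I_{[N^1_s\ge1]}}{(N^1_s)^2}\le 6(\lambda s)^{-2}$, the remainder $|\mathbb{E}\{f(X_t)I_{[N^1_t=0]}\}|\le\|f\|_\infty e^{-\lambda t}$, and the same harmonic-mean balancing (your $r=\lambda t/(\lambda+\delta)$ is exactly the paper's choice $s=\frac{(\gamma_1-\|F\|_{Lip})t}{\gamma_1-\|F\|_{Lip}+\lambda}$ in (\ref{32}), with the roles of the two subintervals relabelled). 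But there is one step that fails as written. Your estimate $\mathbb{E}|M_s|^2\le C|\xi|^2 s$ applies the It\^o isometry to \emph{all} of $M_s$, which requires $\int_\mathbb{H}|\nabla\log\rho(z)|^2\rho(z)\mu(dz)=\int_\mathbb{H}\frac{|\nabla\rho(z)|^2}{\rho(z)}\mu(dz)<\infty$. Hypothesis $({\bf H1})$ does not give this: boundedness of $\nabla\rho$ controls $\int_\mathbb{H}|\nabla\rho|\,d\mu\le\|\nabla\rho\|_\infty$, but not $\int_\mathbb{H}|\nabla\rho|^2/\rho\,d\mu$, because $\rho$ may dip arbitrarily close to $0$; e.g.\ one can build $\rho>0$ with $\|\nabla\rho\|_\infty\le1$ whose dips near $z=n$ descend with unit slope to depth $e^{-e^{n^2}}$, so that each dip contributes on the order of $\ln(1/\mathrm{depth})\,e^{-n^2/2}\sim e^{n^2/2}$ and the integral diverges. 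This is precisely why the paper splits $M_t$ in (\ref{26})--(\ref{29}): only the $\langle z,Q^{-1}J_s\xi\rangle$ part is handled in $L^2$ (isometry plus Cauchy--Schwarz against the Poisson weight), while the $\nabla\log\rho$ part is handled in $L^1$, using $\frac{I_{[N^1_t\ge1]}}{N^1_t}\le1$, $\|J_s\|\le e^{-\delta s}$ and $|\nabla\log\rho|\rho=|\nabla\rho|$, yielding the constant $\int_\mathbb{H}|\nabla\rho|\,d\mu/\delta$ in (\ref{29}).

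The repair costs you the $s^{-1/2}$ decay: the resulting bound $\|\nabla P^1_sf\|_\infty\le C\|f\|_\infty$ is merely uniform in $s\ge1$, as in (\ref{30}). Contrary to your closing remark, this is harmless -- no decay of the prefactor is needed, only non-growth, since the optimization involves only the exponentials $e^{-\delta r}$ and $e^{-\lambda(t-r)}$; the paper's proof goes through with an $O(1)$ gradient bound. A second, lesser omission: the derivative formula (Theorem 1.1) is proved for $F\in C^1_b(\mathbb{H}\rightarrow\mathbb{H})$ with Lipschitz $\nabla F$, whereas $({\bf H4})$ gives only Lipschitz $F$, so $\nabla F(X_r)$ in your variational equation is not directly available; the paper's Step 2 mollifies $F$ via (\ref{43}) with the uniform control $\sup_n\|\nabla F_n\|_\infty\le\|F\|_{Lip}$, proves the Lipschitz-in-$x$ bound (\ref{45}) uniformly in $n$, and passes to the limit -- your argument needs this (routine but not optional) layer as well.
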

\begin{remark}
Under the conditions of Theorem 1.1, if the solution admits invariant probability measures and each invariant measure is integrable,
then exponential ergodicity follows from $(\ref{0})$.
\end{remark}

An example is given to show the conditions of Theorem 1.1 and Theorem 1.2 on $Q$ and $S(t)$ can be satisfied.\\
{\bf{Example:}} Let $(\bf{H3})$ hold. For $0<\delta<\frac{1}{2}$, the fractional power $(-A)^\delta$ of $-A$ is defined by
\begin{align*}
(-A)^\delta=\frac{1}{\Gamma(\delta)}\int_0^\infty t^{-\delta}S(t)dt,
\end{align*}
where $\Gamma$ is the Euler function. It can be proved that $S(t)\mathbb{H}\subset\mathcal{D}((-A)^\delta)$ and for any $t>0$,
\begin{align*}
\|(-A)^\delta S(t)\|\leq C_\delta t^{-\delta}
\end{align*}
for a suitable positive constant $C_\delta$. Take $Q=\big((-A)^\delta\big)^{-1}$, then we have
$S(t)\mathbb{H}\subset$Im$Q$. Moreover,
\begin{align*}
\lim\limits_{t\rightarrow\infty}\frac{\int_0^t\|Q^{-1}S(s)\|^2ds}{t}\leq\lim\limits_{t\rightarrow\infty}
\frac{C^2_\delta\int_0^ts^{-2\delta}ds}{t}=\lim\limits_{t\rightarrow\infty}\frac{C^2_\delta t^{1-2\delta}}{t}=0.
\end{align*}

The rest of this paper is organized as follows: in section 2, we shall investigate an integration by parts formula for jump processes
valued on $\mathbb{H}$; the proofs of the main results will be presented in section 3.
\section{Integration by Parts Formula}
An integration by parts formula can enable one to derive the derivative formula, and it is a powerful tool in stochastic analysis.
The associated formula for jump processes on $\mathbb{R}^d$ can be found in \cite{Bass1986}, \cite{Dong2012}, \cite{Norris1988},\cite{Takeuchi2010}
 and so on. But so far, there are few references studying the formula for jump processes in infinite-dimensional case.

\noindent For fixed $T>0$, denote
\begin{align*}
L^1(\Omega\times[0, T])=\Big\{V:\Omega\times[0, T]\rightarrow \mathbb{H}\Big|\int_0^T\mathbb{E}|V(s)|ds<\infty\Big\}.
\end{align*}
Define a perturbed random measure $N^{1,\epsilon}$ by
\begin{align}
N^{1,\epsilon}(\Gamma\times [0,t])=\int_0^t\int_{\mathbb{H}}I_\Gamma(z+\epsilon V(s))N^1(dz,ds),~~~\Gamma\in\mathscr{B}(\mathbb{H}),
\end{align}
for $V\in L^1(\Omega\times[0, T])$.
Let $\{L_t^{1,\epsilon}\}_{0\leq t\leq T}$ be the associated L\'evy process perturbed by $\epsilon V$, that is ,
\begin{eqnarray*}
L_t^{1,\epsilon}=L^1_t+\epsilon\int_0^t\int_{\mathbb{H}}V(s)N^1(dz,ds).
\end{eqnarray*}
Recall the following notion of $L^1$-derivative, which was first introduced in \cite{Bass1986} and can also be found
in \cite{Dong2012} and \cite{Song2012}.
\begin{dfn}
A functional $G_t=G(\{L_s\}_{s\leq t})$ is called to have an $L^1$-derivative in the direction $V$,
if there exists an integrable random variable denoted by $D_VG_t$, such that
\begin{align*}
\lim\limits_{\epsilon\rightarrow0}\mathbb{E}
\Big|\frac{G(\{L^\epsilon_s\}_{s\leq t})-G(\{L_s\}_{s\leq t})}{\epsilon}-D_VG_t\Big|=0.
\end{align*}
\end{dfn}
In order to obtain the integration by parts formula, we would like to construct a weighted probability measure
such that the distribution of the perturbed process under this weighted probability equals the one of original
processes under the reference measure $P^1$. For the sake of convenience, denote
\begin{align*}
\mathscr{V}_1=\Big\{V:\Omega\times[0,T]\rightarrow \text{Im}Q\Big|V\text{ is predictable and }
\sup\limits_{s\leq T}|Q^{-1}V(s)|<\infty.\Big\},
\end{align*}
and
\begin{align*}
\mathscr{V}_2=\Big\{V:\Omega\times[0,T]\rightarrow \text{Im}Q\Big|V\text{ is predictable and }
\int_0^T\mathbb{E}|Q^{-1}V(s)|ds<\infty.\Big\}.
\end{align*}
For $\epsilon>0$, $V\in\mathscr{V}_1$ and $w_2\in\Omega_2$, we set
\begin{align*}
\lambda^{\epsilon,w_2}(s,z)=
\varphi\big(z,\epsilon V^{w_2}(s)\big)\frac{\rho\big(z+\epsilon V^{w_2}(s)\big)}{\rho(z)}.
\end{align*}
Take
\begin{eqnarray*}
Z_t^{\epsilon,w_2}=\exp\Big\{\int_0^t\int_\mathbb{H}\log\lambda^{\epsilon,w_2}(s,z)N^1(dz,ds)
-\int_0^t\int_\mathbb{H}(\lambda^{\epsilon,w_2}(s,z)-1)\rho(z)\mu(dz)ds\Big\},
\end{eqnarray*}
then $\{Z_t^{\epsilon,w_2}\}_{t\leq T}$ satisfies the following equation
\begin{equation*}
\begin{cases}
dZ_t^{\epsilon,w_2}=Z_{t-}^{\epsilon,w_2}(\lambda^{\epsilon,w_2}(t,z)-1)\widetilde{N^1}(dz,dt),\\
Z_0^{\epsilon,w_2}=1,
\end{cases}
\end{equation*}
and can be formulated as
\begin{align}
\label{3}Z_t^{\epsilon,w_2}=1+\int_0^t\int_\mathbb{H}Z_{s-}^{\epsilon,w_2}(\lambda^{\epsilon,w_2}(s,z)-1)\widetilde{N^1}(dz,ds).
\end{align}
So
$\{Z_t^{\epsilon,w_2}\}_{0\leq t\leq T}$ is a martingale and ${Z_0^{\epsilon,w_2}=1}$.
Define a probability measure $\mathbb{P}^{1,\epsilon,w_2}$ as
\begin{align*}
\frac{d\mathbb{P}^{1,\epsilon,w_2}}{d\mathbb{P}^1}\Big|_{\mathcal {F}_t}=Z_t^{\epsilon,w_2},~~t\leq T.
\end{align*}
\begin{lem}
For any $V\in\mathscr{V}_1$, the law of $L^{1,\epsilon}_t$ under $\mathbb{P}^{1,\epsilon,w_2}$ is equal to the one of $L_t^1$ under $\mathbb{P}^1$.
\end{lem}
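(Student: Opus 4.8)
The plan is to read this as a Girsanov-type statement and to prove equality of laws by matching the predictable compensator of the jump measure. Both driving processes are reconstructed from their jump measures by the \emph{same} functional: under the standing assumption $(\mathbf{H1})$ the L\'evy measure $\rho\,\mu$ is finite, so $L^1$ is of compound-Poisson type with $L^1_t=\int_0^t\int_{\mathbb{H}}z\,N^1(dz,ds)$, and by the very definition of $N^{1,\epsilon}$ one has $L^{1,\epsilon}_t=\int_0^t\int_{\mathbb{H}}z\,N^1(dz,ds)+\epsilon\int_0^t\int_{\mathbb{H}}V(s)\,N^1(dz,ds)=\int_0^t\int_{\mathbb{H}}y\,N^{1,\epsilon}(dy,ds)$. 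Hence it suffices to show that under $\mathbb{P}^{1,\epsilon,w_2}$ the measure $N^{1,\epsilon}$ has the same law as $N^1$ has under $\mathbb{P}^1$. Since the $\mathbb{P}^1$-compensator of $N^1$ is the deterministic $\rho(z)\mu(dz)\,ds$, and a point process with a deterministic compensator is a Poisson random measure with that intensity (Watanabe's characterisation), the entire claim is reduced to computing the $\mathbb{P}^{1,\epsilon,w_2}$-compensator of $N^{1,\epsilon}$ and verifying that it is again $\rho(y)\mu(dy)\,ds$. Throughout I freeze $w_2\in\Omega_2$ and argue on $(\Omega_1,\mathbb{P}^1)$, where $V^{w_2}$ is predictable; that $Z^{\epsilon,w_2}$ is a true martingale, so that $\mathbb{P}^{1,\epsilon,w_2}$ is a bona fide probability, has already been recorded above and rests on $\sup_{s}|Q^{-1}V(s)|<\infty$ together with the boundedness of $\nabla\log\rho$.

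For the core computation, recall that $Z^{\epsilon,w_2}$ is the stochastic exponential solving $(\ref{3})$, so by the Girsanov theorem for random measures the $\mathbb{P}^1$-compensator of $N^1$ is multiplied by the factor $\lambda^{\epsilon,w_2}$ under $\mathbb{P}^{1,\epsilon,w_2}$, becoming $\lambda^{\epsilon,w_2}(s,z)\rho(z)\mu(dz)\,ds$. As $N^{1,\epsilon}$ is the image of $N^1$ under the predictable shift $(s,z)\mapsto(s,z+\epsilon V^{w_2}(s))$, its $\mathbb{P}^{1,\epsilon,w_2}$-compensator is the pushforward of this measure. Writing $h=\epsilon V^{w_2}(s)$ and testing against an arbitrary bounded measurable $g$, and inserting the explicit form of $\lambda^{\epsilon,w_2}$, this pushforward equals
\[
\int_{\mathbb{H}} g(z+h)\,\varphi(z,h)\,\frac{\rho(z+h)}{\rho(z)}\,\rho(z)\,\mu(dz)
=\int_{\mathbb{H}} g(z+h)\,\rho(z+h)\,\varphi(z,h)\,\mu(dz).
\]
The quasi-invariance identity $\varphi(z,h)\mu(dz)=\mu(dz+h)$ together with $\int_{\mathbb{H}}\psi(z)\,\mu(dz+h)=\int_{\mathbb{H}}\psi(z-h)\,\mu(dz)$, applied with $\psi(z)=g(z+h)\rho(z+h)$, collapses the right-hand side to $\int_{\mathbb{H}}g(z)\rho(z)\mu(dz)$. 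Thus the $\mathbb{P}^{1,\epsilon,w_2}$-compensator of $N^{1,\epsilon}$ is exactly $\rho(y)\mu(dy)\,ds$, and, crucially, this is deterministic even though the shift $h$ was random and predictable.

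With the compensators matched, Watanabe's characterisation identifies $N^{1,\epsilon}$ under $\mathbb{P}^{1,\epsilon,w_2}$ as a Poisson random measure of intensity $\rho(y)\mu(dy)\,ds$, i.e. the same intensity $N^1$ carries under $\mathbb{P}^1$; integrating $y$ against it recovers $L^{1,\epsilon}$ with the asserted law, which also explains why $L^{1,\epsilon}$ is again a L\'evy process under the new measure. I expect the main obstacle to be the cancellation in the displayed identity: it is the built-in factor $\varphi(z,h)$ in $\lambda^{\epsilon,w_2}$ — the Radon--Nikodym derivative furnished by the quasi-invariance of the Gaussian $\mu$ — that stands in for the translation invariance of Lebesgue measure that is unavailable in infinite dimensions, and the three reweightings ($\varphi$, the ratio $\rho(z+h)/\rho(z)$, and the base density $\rho$) must be tracked in exact correspondence for the collapse to occur. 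The surviving technicalities — the true-martingale property of $Z^{\epsilon,w_2}$ and the measurability in the frozen variable $w_2$ — are either granted above or routine.
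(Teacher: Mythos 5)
Your proof is correct, but it reaches the conclusion by a genuinely different route than the paper. The paper never invokes the Girsanov theorem for random measures or Watanabe's characterisation: it fixes a test function $\phi\leq 0$, sets $Y_t^{\epsilon,w_2}=\exp\{\int_0^t\int_{\mathbb{H}}\phi(s,z)N^{1,\epsilon,w_2}(dz,ds)\}$ and $G_t^{\epsilon,w_2}=Y_t^{\epsilon,w_2}Z_t^{\epsilon,w_2}$, expands $G^{\epsilon,w_2}$ by It\^{o}'s formula, and — after the same quasi-invariance substitution $\varphi(z,h)\mu(dz)=\mu(dz+h)$ that powers your pushforward computation — obtains the closed linear equation $\mathbb{E}^1G_t^{\epsilon,w_2}=1+\int_0^t\mathbb{E}^1G_s^{\epsilon,w_2}\int_{\mathbb{H}}(e^{\phi(s,z)}-1)\rho(z)\mu(dz)ds$, whence $\mathbb{E}^1G_t^{\epsilon,w_2}=\exp\{\int_0^t\int_{\mathbb{H}}(e^{\phi(s,z)}-1)\rho(z)\mu(dz)ds\}$ is independent of $\epsilon$ and is exactly the Laplace functional of a Poisson random measure with intensity $\rho(z)\mu(dz)ds$. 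So the decisive cancellation is identical in both arguments — in the paper it sits inside the chain of equalities for $\mathbb{E}^1G_t^{\epsilon,w_2}$, in yours it appears as the pushforward of the Girsanov-tilted compensator $\lambda^{\epsilon,w_2}(s,z)\rho(z)\mu(dz)ds$ under the predictable shift $(s,z)\mapsto(s,z+\epsilon V^{w_2}(s))$ — and both identify the law of the whole perturbed jump measure, from which your reduction $L^{1,\epsilon}_t=\int_0^t\int_{\mathbb{H}}y\,N^{1,\epsilon}(dy,ds)$ (valid because $\lambda<\infty$ makes $L^1$ compound Poisson) recovers the lemma. Your packaging is more conceptual and modular: it explains that $Z^{\epsilon,w_2}$ is precisely the density making the shifted compensator deterministic, and it generalizes readily; the paper's computation buys self-containedness, trading your two cited theorems for an elementary It\^{o}-plus-ODE verification. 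One small correction: the true-martingale property of $Z^{\epsilon,w_2}$ should not be attributed to boundedness of $\nabla\log\rho$, which is not available in this lemma (the lower bound $\rho\geq\delta$ is assumed only in Lemma 2.2); it follows instead from $\lambda<\infty$ together with the very same change of variables, which shows $\int_{\mathbb{H}}\lambda^{\epsilon,w_2}(s,z)\rho(z)\mu(dz)=\lambda$, so the tilt preserves the finite total jump intensity.
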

\begin{proof}
For any test function $\phi\leq0$ and fixed $w_2\in\Omega_2$, denote
\begin{align*}
Y_t^{\epsilon,w_2}=\exp\Big\{\int_0^t\int_{\mathbb{H}}\phi(s,z)N^{1,\epsilon,w_2}(dz,ds)\Big\},\ \ \
\ \ G_t^{\epsilon,w_2}=Y_t^{\epsilon,w_2} Z_t^{\epsilon,w_2}.
\end{align*}
We just check that $\mathbb{E}^{1,\epsilon,w_2}G_t^{\epsilon,w_2}$ does not depend on $\epsilon$,
where $\mathbb{E}^{1,\epsilon,w_2}$ denotes the associated expectation of conditional probability
$\mathbb{P}^{1,\epsilon,w_2}$. Actually, note that
\begin{align*}
Y_t^{\epsilon,w_2}=&\exp\Big\{\int_0^t\int_{\mathbb{H}}\phi(s,z)N^{1,\epsilon,w_2}(dz,ds)\Big\}\\
=&\exp\Big\{\int_0^t\int_{\mathbb{H}}\phi(s,z+\epsilon V^{w_2}(s))N^1(dz,ds)\Big\}\\
=&1+\int_0^t\int_{\mathbb{H}}Y_{s-}^{\epsilon,w_2}\big(e^{\phi(s,z+\epsilon V^{w_2}(s))}-1\big)N^1(dz,ds),
\end{align*}
and
\begin{align*}
[Y^{\epsilon,w_2},Z^{\epsilon,w_2}]_t
=\int_0^t\int_\mathbb{H}Y_{s-}^{\epsilon,w_2} Z_{s-}^{\epsilon,w_2}\big(e^{\phi(s,z+\epsilon V^{w_2}(s))}-1\big)
\big(\lambda^{\epsilon,w_2}(s,z)-1\big)N^1(dz,ds).
\end{align*}
Applying It$\hat{\mathrm{o}}$ formula, it yields
\begin{align*}
G_t^{\epsilon,w_2}=&1+\int_0^t\int_\mathbb{H}Y_{s-}^{\epsilon,w_2}dZ_s^{\epsilon,w_2}
+\int_0^t\int_\mathbb{H}Z_{s-}^{\epsilon,w_2} dY_s^{\epsilon,w_2}+[Y^{\epsilon,w_2},Z^{\epsilon,w_2}]_t\\
=&1+\int_0^t\int_\mathbb{H}Y_{s-}^{\epsilon,w_2}dZ_s^{\epsilon,w_2}
+\int_0^t\int_\mathbb{H}Z_{s-}^{\epsilon,w_2}Y_{s-}^{\epsilon,w_2}\big(e^{\phi(s,z+\epsilon V^{w_2}(s))}-1\big)N^1(dz,ds)\\
&+\int_0^t\int_\mathbb{H}Y_{s-}^{\epsilon,w_2}Z_{s-}^{\epsilon,w_2}\big(e^{\phi(s,z+\epsilon V^{w_2}(s))}-1\big)
\big(\lambda^{\epsilon,w_2}(s,z)-1\big)N^1(dz,ds)\\
=&1+\int_0^tY_{s-}^{\epsilon,w_2}dZ_s^{\epsilon,w_2}+\int_0^t\int_\mathbb{H}G_{s-}^{\epsilon,w_2}\big(e^{\phi(s,z+\epsilon
V^{w_2}(s))}-1\big)\lambda^{\epsilon,w_2}(s,z)N^1(dz,ds).
\end{align*}
Furthermore,
\begin{align*}
\mathbb{E}^1G_t^{\epsilon,w_2}
=&1+\mathbb{E}^1\int_0^t\int_\mathbb{H}G_{s}^{\epsilon,w_2}\big(e^{\phi(s,z+\epsilon V^{w_2}(s))}-1\big)
 \varphi(z, \epsilon V^{w_2}(s))\frac{\rho(z+\epsilon V^{w_2}(s))}{\rho(z)}\rho(z)\mu(dz)ds\\
=&1+\mathbb{E}^1\int_0^t\int_\mathbb{H}G_{s}^{\epsilon,w_2}\big(e^{\phi(s,z+\epsilon V^{w_2}(s))}-1\big)
\varphi(h,\epsilon V^{w_2}(s))\rho(z+\epsilon V^{w_2}(s))\mu(dz)ds\\
=&1+\mathbb{E}^1\int_0^t\int_\mathbb{H}G_{s}^{\epsilon,w_2}\big(e^{\phi(s,z+\epsilon V^{w_2}(s))}-1\big)
\rho(z+\epsilon V^{w_2}(s))\mu\big(dz+\epsilon V^{w_2}(s)\big)ds\\
=&1+\int_0^t\mathbb{E}^1G_s^{\epsilon,w_2}\int_\mathbb{H}\big(e^{\phi(s,z)}-1\big)\rho(z)\mu(dz)ds.
\end{align*}
Therefore,
\begin{align*}
\mathbb{E}^1G_t^{\epsilon,w_2}=\exp\Big\{\int_0^t\int_\mathbb{H}\big(e^{\phi(s,z)}-1\big)\rho(z)\mu(dz)ds\Big\}.
\end{align*}
\end{proof}

\begin{lem}
Assume $({\bf{H1}})$ holds. If there exists a constant $\delta>0$ such that $\rho(z)\geq\delta$ for $\forall z\in\mathbb{H}$,
then for any fixed $t\in[0,T]$ and $V\in\mathcal{V}$,
\begin{align*}
\sup\limits_{\epsilon\leq1}\mathbb{E}^1\Big\{\Big|\frac{Z^{\epsilon,w_2}_t-1}{\epsilon}\Big|^2\Big\}<\infty.
\end{align*}
for any fixed $w_2\in\Omega_2$.
\end{lem}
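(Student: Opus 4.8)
The plan is to start from the martingale representation $(\ref{3})$, which gives
\[
\frac{Z_t^{\epsilon,w_2}-1}{\epsilon}=\frac{1}{\epsilon}\int_0^t\int_\mathbb{H}Z_{s-}^{\epsilon,w_2}\big(\lambda^{\epsilon,w_2}(s,z)-1\big)\widetilde{N^1}(dz,ds),
\]
and then to apply the It\^o isometry for the compensated measure $\widetilde{N^1}$, whose compensator is $\rho(z)\mu(dz)\,ds$. This reduces the claim to the identity
\[
\mathbb{E}^1\Big|\frac{Z_t^{\epsilon,w_2}-1}{\epsilon}\Big|^2=\frac{1}{\epsilon^2}\,\mathbb{E}^1\int_0^t (Z_{s-}^{\epsilon,w_2})^2\,K^\epsilon(s)\,ds,\qquad K^\epsilon(s):=\int_\mathbb{H}\big(\lambda^{\epsilon,w_2}(s,z)-1\big)^2\rho(z)\mu(dz),
\]
so that it suffices to establish two quantitative facts: (i) the pointwise bound $K^\epsilon(s)\le C\epsilon^2$, with $C$ uniform in $s\le t$ and $\epsilon\le1$; and (ii) the uniform second-moment bound $\sup_{\epsilon\le1}\sup_{s\le t}\mathbb{E}^1(Z_s^{\epsilon,w_2})^2<\infty$. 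Combining (i) and (ii) in the identity above immediately yields the $\epsilon$-uniform bound, since the factor $\epsilon^{-2}$ is absorbed by $K^\epsilon(s)\le C\epsilon^2$.

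The crux, which I expect to be the main obstacle, is (i): extracting the $O(\epsilon^2)$ behaviour of $K^\epsilon$ without losing it to crude inequalities. Writing $V(s)$ for $V^{w_2}(s)$ and using $\langle V(s),z\rangle_0=\langle Q^{-1}V(s),z\rangle$ (valid since $V(s)\in\mathrm{Im}Q$), I regard $\lambda^{\epsilon,w_2}(s,z)=g(\epsilon)$ where $g(u)=\exp\{u\langle Q^{-1}V(s),z\rangle-\tfrac12 u^2\langle V(s),V(s)\rangle_0\}\,\rho(z+uV(s))/\rho(z)$. Since $g(0)=1$ and $g$ is $C^1$ in $u$, Cauchy--Schwarz gives $(\lambda^{\epsilon,w_2}-1)^2=(\int_0^\epsilon g'(u)\,du)^2\le\epsilon\int_0^\epsilon g'(u)^2\,du$, whence by Tonelli $K^\epsilon(s)\le\epsilon\int_0^\epsilon\big(\int_\mathbb{H}g'(u)^2\rho(z)\mu(dz)\big)du$. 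The whole task thus collapses to a single uniform $L^2(\rho\,d\mu)$ bound on $g'(u)$ for $u\le1$, rather than to tracking the delicate cancellations of the $O(1)$ and $O(\epsilon)$ terms that would appear upon expanding $(\lambda^{\epsilon,w_2}-1)^2$ directly. Differentiating,
\[
g'(u)=g(u)\Big(\langle Q^{-1}V(s),z\rangle-u\langle V(s),V(s)\rangle_0+\frac{\langle\nabla\rho(z+uV(s)),V(s)\rangle}{\rho(z+uV(s))}\Big).
\]

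To bound the two factors I use the defining bound $\kappa:=\sup_{s\le T}|Q^{-1}V(s)|<\infty$ of the class $\mathscr V_1$ together with $|V(s)|\le\|Q\|\,|Q^{-1}V(s)|\le\|Q\|\kappa$, the lower bound $\rho\ge\delta$, and the bounded derivative of $\rho$. These give $|\,\cdot\,|\le C(1+|z|)$ for the bracket, while dropping the nonpositive term $-u^2\langle V(s),V(s)\rangle_0$ and controlling the ratio $\rho(z+uV(s))/\rho(z)\le 1+\|\nabla\rho\|_\infty\|Q\|\kappa/\delta$ yields $g(u)^2\le C\exp\{2\kappa|z|\}$ for $u\le1$; hence $g'(u)^2\le C\,e^{2\kappa|z|}(1+|z|)^2$ with constants free of $u$. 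The required integrability $\int_\mathbb{H}e^{2\kappa|z|}(1+|z|)^2\rho(z)\mu(dz)<\infty$ now follows because the bounded derivative of $\rho$ forces at most linear growth $\rho(z)\le\rho(0)+\|\nabla\rho\|_\infty|z|$, whereas the Gaussian $\mu$ possesses all exponential-times-polynomial moments (Fernique). This establishes $K^\epsilon(s)\le C\epsilon^2$ uniformly, i.e.\ (i). For (ii), It\^o's formula applied to $(Z^{\epsilon,w_2})^2$ shows the martingale part has zero $\mathbb{E}^1$-expectation and the predictable compensator of the jumps is exactly $K^\epsilon(s)$, so $\mathbb{E}^1(Z_t^{\epsilon,w_2})^2=1+\mathbb{E}^1\int_0^t(Z_{s-}^{\epsilon,w_2})^2K^\epsilon(s)\,ds\le 1+C\epsilon^2\int_0^t\mathbb{E}^1(Z_s^{\epsilon,w_2})^2\,ds$, and Gronwall's inequality gives $\mathbb{E}^1(Z_t^{\epsilon,w_2})^2\le e^{C\epsilon^2 t}\le e^{Ct}$ for $\epsilon\le1$.

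Assembling, $\mathbb{E}^1|(Z_t^{\epsilon,w_2}-1)/\epsilon|^2\le \epsilon^{-2}\,C\epsilon^2\int_0^t\mathbb{E}^1(Z_s^{\epsilon,w_2})^2\,ds\le Ct\,e^{Ct}$, uniformly in $\epsilon\le1$, as desired. Besides the estimate (i), the only genuinely technical point is a standard localization by stopping times at which $Z^{\epsilon,w_2}$ is bounded, needed both to legitimize the It\^o isometry as an $L^2$ equality and to upgrade the local martingale appearing in the $(Z^{\epsilon,w_2})^2$ computation to a true martingale before invoking Gronwall; this is routine once the a priori bound (i) is in hand, and one passes to the limit by Fatou's lemma.
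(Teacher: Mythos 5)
Your argument is correct, but it reaches the lemma by a genuinely different route than the paper. Structurally, the paper also starts from (\ref{3}), but instead of the It\^o isometry it uses the triangle and B-D-G inequalities after splitting $Z^{\epsilon,w_2}_{s-}=(Z^{\epsilon,w_2}_{s-}-1)+1$, and then runs Gronwall directly on $\mathbb{E}^1\sup_{s\le t}\big|(Z^{\epsilon,w_2}_s-1)/\epsilon\big|^2$ (see (\ref{1})), which yields the slightly stronger supremum version in one pass; you instead decouple the problem into the isometry identity plus a separate Gronwall bound for $\sup_{s\le t}\mathbb{E}^1(Z^{\epsilon,w_2}_s)^2$, which proves exactly the stated fixed-$t$ claim, with your localization/Fatou remark correctly supplying the missing square-integrability. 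The substantive divergence is in the crux estimate, your $K^\epsilon(s)\le C\epsilon^2$, which is precisely the paper's bound (\ref{51}): the paper splits $(\lambda^{\epsilon,w_2}-1)/\epsilon$ into a $\rho$-increment and a $\varphi$-increment as in (\ref{4}), applies the mean value theorem with intermediate points $\epsilon_2,\epsilon_3$, and controls the resulting integrals exactly through the Cameron--Martin identities (\ref{5}) and shifted measures $\mu(dz+cV^{w_2}(s))$, using only the moments postulated in ({\bf H1}) together with $\rho\ge\delta$; you write $\lambda^{\epsilon,w_2}=g(\epsilon)$, use the fundamental theorem of calculus plus Cauchy--Schwarz along the perturbation segment, and reduce everything to the single crude bound $g'(u)^2\le Ce^{2\kappa|z|}(1+|z|)^2$, closed by Fernique's theorem and the linear growth $\rho(z)\le\rho(0)+\|\nabla\rho\|_\infty|z|$. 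Your version avoids the paper's shifted-measure bookkeeping and is arguably more robust, at the price of invoking Fernique where the paper needs only ({\bf H1}); since linear growth of $\rho$ plus Fernique implies those moment conditions, nothing is lost. One cosmetic point: the ``$V\in\mathcal{V}$'' in the statement is a typo for $V\in\mathscr{V}_1$, and your standing hypothesis $\kappa=\sup_{s\le T}|Q^{-1}V(s)|<\infty$ matches exactly how the lemma is invoked in Step 1 of Theorem 2.1.
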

\begin{proof}
For fixed $\epsilon\in(0,1)$, it follows $(\ref{3})$ that
\begin{align*}
Z_t^{\epsilon,w_2}-1=\int_0^t\int_\mathbb{H}Z^{\epsilon,w_2}_{s-}(\lambda^{\epsilon,w_2}(s,z)-1)\widetilde{N^1}(dz,ds).
\end{align*}
Triangle inequality and B-D-G inequality yield
\begin{align}
\label{1}
\mathbb{E}^1\Big\{\sup\limits_{s\leq t}\Big|\frac{Z^{\epsilon,w_2}_s-1}{\epsilon}\Big|^2\Big\}
=&\mathbb{E}^1\Big\{\sup\limits_{s\leq t}\Big|\int_0^s\int_\mathbb{H}
\frac{Z^{\epsilon,w_2}_{r-}(\lambda^{\epsilon,w_2}(r,z)-1)}{\epsilon}\widetilde{N^1}(dz,dr)\Big|^2\Big\}\cr
\leq&2\mathbb{E}^1\Big\{\sup\limits_{s\leq t}\Big|\int_0^s\int_\mathbb{H}
\frac{Z^{\epsilon,w_2}_{r-}-1}{\epsilon}(\lambda^{\epsilon,w_2}(r,z)-1)\widetilde{N^1}(dz,dr)\Big|^2\Big\}\cr
&+2\mathbb{E}^1\Big\{\sup\limits_{s\leq t}\Big|\int_0^s\int_\mathbb{H}
\frac{\lambda^{\epsilon,w_2}(r,z)-1}{\epsilon}\widetilde{N^1}(dz,dr)\Big|^2\Big\}\cr
\leq&C\mathbb{E}^1\Big\{\int_0^t\int_\mathbb{H}\Big(\sup\limits_{r\leq s}
\Big|\frac{Z^{\epsilon,w_2}_{r-}-1}{\epsilon}\Big|\Big)^2\Big|\lambda^{\epsilon,w_2}(s,z)-1\Big|^2\rho(z)\mu(dz)ds\Big|\Big\}\cr
&+C\mathbb{E}^1\Big\{\int_0^t\int_\mathbb{H}\Big|\frac{\lambda^{\epsilon,w_2}(s,z)-1}{\epsilon}\Big|^2\rho(z)\mu(dz)ds\Big\},
\end{align}
where $C$ is a constant,which may change value from line to line and is independent of $\epsilon$.
Note that
\begin{align}
\label{4}
\Big|\frac{\lambda^\epsilon(s,z)-1}{\epsilon}\Big|^2
=&\frac{\big|\varphi\big(z,\epsilon V(s)\big)\frac{\rho\big(z+\epsilon V(s)\big)}{\rho(z)}-1\big|^2}{\epsilon^2}\cr
\leq&2\frac{\varphi^2\big(z,\epsilon V(s)\big)}{\rho^2(z)}
\Big|\frac{\rho\big(z+\epsilon V(s)\big)-\rho(z)}{\epsilon}\Big|^2\cr
&+2\Big|\frac{\varphi\big(z,\epsilon V(s)\big)-1}{\epsilon}\Big|^2,
\end{align}
and
\begin{align}
\label{5}
\varphi^2\big(z,\epsilon V^{w_2}(s)\big)
=&\exp\Big\{\langle z,2\epsilon Q^{-1}V^{w_2}(s)\rangle-\epsilon^2\langle V^{w_2}(s), Q^{-1}V^{w_2}(s)\rangle\Big\}\cr
=&\varphi\big(z,2\epsilon V^{w_2}(s)\big)\exp\Big\{\epsilon^2\langle V^{w_2}(s), Q^{-1}V^{w_2}(s)\rangle\Big\}.
\end{align}
Therefore, by mean value theorem and ($\ref{5}$), there exists constant a $C_1>0$ independent of $\epsilon$ such that
\begin{align}
\label{47}
I_1
:=&\int_\mathbb{H}\frac{\varphi^2\big(z,\epsilon V^{w_2}(s)\big)}{\rho(z)}
\Big|\frac{\rho\big(z+\epsilon V^{w_2}(s)\big)-\rho(z)}{\epsilon}\Big|^2\mu(dz)\cr
\leq&\frac{1}{\delta}\|\nabla\rho\|_{\infty}^2|V^{w_2}(s)|^2\int_\mathbb{H}\varphi^2\big(z,\epsilon V^{w_2}(s)\big)\mu(dz)\cr
=&\frac{1}{\delta}\|\nabla\rho\|_{\infty}^2|V^{w_2}(s)|^2\exp\Big\{\langle V^{w_2}(s), Q^{-1}V^{w_2}(s)\rangle\Big\}\cr
\leq&C_1,
\end{align}
and
\begin{align}
\label{48}
I_2
:=&\int_\mathbb{H}\Big|\frac{\varphi\big(z,\epsilon V^{w_2}(s)\big)-1}{\epsilon}\Big|^2\rho(z)\mu(dz)\cr
=&\int_\mathbb{H}\varphi^2\big(z,\epsilon_2V^{w_2}(s)\big)\Big|\langle z-\epsilon_2V^{w_2}(s),
Q^{-1}V^{w_2}(s)\rangle\Big|^2\rho(z)\mu(dz)\cr
\leq&2|Q^{-1}V^{w_2}(s)|^2\int_\mathbb{H}\varphi^2\big(z,\epsilon_2V^{w_2}(s)\big)|z|^2\rho(z)\mu(dz)\cr
&+2|Q^{-1}V^{w_2}(s)|^2|V^{w_2}(s)|^2\int_\mathbb{H}\varphi^2\big(z,\epsilon_2V^{w_2}(s)\big)\rho(z)\mu(dz)\cr
:=&I_{21}+I_{22},
\end{align}
where $\epsilon_2\in(0,\epsilon)$ is a proper constant.
Furthermore, there exist constants $\epsilon_3\in(0,\epsilon_2)$, $C_2$ and $C_3$ independent of $\epsilon$ such that
\begin{align}
\label{49}
I_{21}
\leq&2|Q^{-1}V^{w_2}(s)|^2\exp\Big\{\langle V^{w_2}(s),
Q^{-1}V^{w_2}(s)\rangle\Big\}\int_\mathbb{H}|z|^2\rho(z)\mu(dz+2\epsilon_2V^{w_2}(s))\cr
\leq&4|Q^{-1}V^{w_2}(s)|^2\exp\Big\{\langle V^{w_2}(s), Q^{-1}V^{w_2}(s)\rangle\Big\}\cr
&\times\int_\mathbb{H}\Big(|z+\epsilon_2V^{w_2}(s)|^2+|\epsilon_2V^{w_2}(s)|^2\Big)\cr
&~~~~~\times\Big(|\rho(z+\epsilon_2V^{w_2}(s))-\rho(z)|+\rho(z+\epsilon_2V^{w_2}(s))\Big)\mu(dz+\epsilon_2V^{w_2}(s))\cr
\leq&4|Q^{-1}V^{w_2}(s)|^2\exp\Big\{\langle V^{w_2}(s), Q^{-1}V^{w_2}(s)\rangle\Big\}\cr
&\times\Bigg\{\int_\mathbb{H}|z+\epsilon_2V^{w_2}(s)|^2\big|\langle\nabla\rho(z+\epsilon_3V^{w_2}(s)),
\epsilon_2V^{w_2}(s)\rangle\big|\mu(dz+\epsilon_2V^{w_2}(s))\cr
&~~~+\int_\mathbb{H}|z+\epsilon_2V^{w_2}(s)|^2\rho(z+\epsilon_2V^{w_2}(s))\mu(dz+\epsilon_2V^{w_2}(s))\cr
&~~~+|V^{w_2}(s)|^2\int_\mathbb{H}|\langle\nabla\rho(z+\epsilon_3V^{w_2}(s)), \epsilon_2V^{w_2}(s)\rangle|
\mu(dz+\epsilon_2V^{w_2}(s))\cr
&~~~+|V^{w_2}(s)|^2\int_\mathbb{H}\rho(z+\epsilon_2V^{w_2}(s))\mu(dz+\epsilon_2V^{w_2}(s))\Bigg\}\cr
\leq&4|Q^{-1}V^{w_2}(s)|^2\exp\Big\{\langle V^{w_2}(s), Q^{-1}V^{w_2}(s)\rangle\Big\}\cr
&\times\Bigg\{|V^{w_2}(s)|\|\nabla\rho\|_{\infty}\int_\mathbb{H}|z|^2\mu(dz)+\int_\mathbb{H}|z|^2\rho(z)\mu(dz)
+|V^{w_2}(s)|^3\|\nabla\rho\|_{\infty}+\lambda|V^{w_2}(s)|^2\Bigg\}\cr
\leq& C_2,
\end{align}
and
\begin{align}
\label{50}
I_{22}=&2|Q^{-1}V^{w_2}(s)|^2|V^{w_2}(s)|^2\int_\mathbb{H}\varphi^2\big(z,\epsilon_2 V^{w_2}(s)\big)\rho(z)\mu(dz)\cr
\leq&2|Q^{-1}V^{w_2}(s)|^2|V^{w_2}(s)|^2\exp\Big\{\epsilon^2\langle V^{w_2}(s), Q^{-1}V^{w_2}(s)\rangle\Big\}\cr
&\times\int_\mathbb{H}\Big(|\rho(z+\epsilon_2V^{w_2}(s))-\rho(z)|+\rho(z+\epsilon_2V^{w_2}(s))\Big)
\mu(dz+2\epsilon_2V^{w_2}(s))\cr
\leq&2|Q^{-1}V^{w_2}(s)|^2|V^{w_2}(s)|^2\exp\Big\{\epsilon^2\langle V^{w_2}(s), Q^{-1}V^{w_2}(s)\rangle\Big\}
\Big\{\|\nabla\rho\|_{\infty}|V^{w_2}(s)|+\lambda\Big\}\cr
\leq&C_3.
\end{align}
Combining $(\ref{47})$, ($\ref{48}$), ($\ref{49}$), ($\ref{50}$) with ($\ref{4}$), we arrive at
\begin{align}
\label{51}
\int_{\mathbb{H}}\Big|\frac{\lambda^{\epsilon,w_2}(s,z)-1}{\epsilon}\Big|^2\rho(z)\mu(dz)\leq 2(C_1+C_2+C_3).
\end{align}
It follows from ($\ref{1}$) and ($\ref{51}$) that
\begin{align*}
\mathbb{E}^1\Big\{\sup\limits_{s\leq t}\Big|\frac{Z^{\epsilon,w_2}_s-1}{\epsilon}\Big|^2\Big\}
\leq C\mathbb{E}^1\Big\{\int_0^t\Big(\sup\limits_{r\leq s}\Big|\frac{Z^{\epsilon,w_2}_r-1}{\epsilon}\Big|^2ds\Big\}+C.
\end{align*}
Applying Gronwall's inequality, we deduce
\begin{align*}
\mathbb{E}^1\Big\{\sup\limits_{s\leq t}\Big|\frac{Z^{\epsilon,w_2}_s-1}{\epsilon}\Big|^2\Big\}<C(t),
\end{align*}
where $C(t)$ is a constant independent of $\epsilon$.
Consequently, the claim is proved.
\end{proof}

With the help of above two lemmas, we are ready to derive the following integration by parts formula.
\begin{thm}
Suppose $(\bf{H1})$ holds. For $V\in\mathscr{V}_2$ and $f\in C^2_b(\mathbb{H})$,
\begin{align}\label{6}
\mathbb{E}\Big\{D_Vf(L_t)\Big\}=-\mathbb{E}\Big\{f(L_t)M_t\Big\},~~t\leq T,
\end{align}
where $M_t=\int_0^t\int_\mathbb{H}\Big(\langle z, Q^{-1}V(s)\rangle\
+\langle\nabla\log\rho(z), V(s)\rangle\Big)\widetilde{N^1}(dz,ds)$.
\end{thm}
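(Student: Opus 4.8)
The plan is to read off the formula by differentiating, at $\epsilon=0$, a quantity that Lemma 2.1 forces to be independent of the perturbation parameter, and to control all the limits by the uniform second-moment estimate of Lemma 2.2. Since the perturbation acts only on the $L^1$-component while the $L^2$-part is frozen, I would first fix $w_2\in\Omega_2$ and work on $(W_1,\mathbb{P}^1)$ to obtain a conditional identity, and only at the very end integrate over $w_2$ against $\mathbb{P}^2$ and apply Fubini to recover the statement with $\mathbb{E}$.

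For fixed $w_2$, write $L^\epsilon_t=L^{1,\epsilon}_t+L^2_t$ and observe that $f(L^\epsilon_t)=g(L^{1,\epsilon}_t)$ with $g(\cdot)=f(\cdot+w_2(t))$. Lemma 2.1, after rewriting the law of $L^{1,\epsilon}_t$ under $\mathbb{P}^{1,\epsilon,w_2}$ through the density $Z^{\epsilon,w_2}_t$, yields the key identity
\begin{align*}
\mathbb{E}^1 f(L_t)=\mathbb{E}^1\big[f(L^\epsilon_t)\,Z^{\epsilon,w_2}_t\big],
\end{align*}
valid for every $\epsilon$. Hence the right-hand side is constant in $\epsilon$ and its derivative at $0$ vanishes. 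Splitting the difference quotient,
\begin{align*}
\frac{f(L^\epsilon_t)Z^{\epsilon,w_2}_t-f(L_t)}{\epsilon}=\frac{f(L^\epsilon_t)-f(L_t)}{\epsilon}\,Z^{\epsilon,w_2}_t+f(L_t)\,\frac{Z^{\epsilon,w_2}_t-1}{\epsilon},
\end{align*}
I expect the first term to tend to $D_Vf(L_t)$ (by Definition 2.1, using $Z^{\epsilon,w_2}_t\to1$) and the second to $f(L_t)\dot Z_t$ with $\dot Z_t$ the $\epsilon$-derivative of the density; taking expectations gives $\mathbb{E}^1 D_Vf(L_t)+\mathbb{E}^1\{f(L_t)\dot Z_t\}=0$.

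It then remains to identify $\dot Z_t=M_t$. From the linear equation (\ref{3}),
\begin{align*}
\frac{Z^{\epsilon,w_2}_t-1}{\epsilon}=\int_0^t\int_\mathbb{H}Z^{\epsilon,w_2}_{s-}\,\frac{\lambda^{\epsilon,w_2}(s,z)-1}{\epsilon}\,\widetilde{N^1}(dz,ds),
\end{align*}
and differentiating $\lambda^{\epsilon,w_2}(s,z)=\varphi(z,\epsilon V(s))\rho(z+\epsilon V(s))/\rho(z)$ at $\epsilon=0$, using $\varphi(z,h)=\exp\{\langle h,z\rangle_0-\frac12\langle h,h\rangle_0\}$ and $\langle V,z\rangle_0=\langle z,Q^{-1}V\rangle$ for $V\in\text{Im}Q$, gives
\begin{align*}
\lim_{\epsilon\to0}\frac{\lambda^{\epsilon,w_2}(s,z)-1}{\epsilon}=\langle z,Q^{-1}V(s)\rangle+\langle\nabla\log\rho(z),V(s)\rangle,
\end{align*}
the first term from the Cameron--Martin factor, the second from the logarithmic derivative of $\rho$. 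Since $Z^{\epsilon,w_2}_{s-}\to1$, the stochastic integral should converge to precisely $M_t$.

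The hard part is justifying these passages to the limit. For the density term I would not rely on pointwise convergence but control it in $L^2(\mathbb{P}^1)$ via the It\^o isometry for $\widetilde{N^1}$, writing $Z^{\epsilon,w_2}_{s-}\frac{\lambda^\epsilon-1}{\epsilon}$ minus its limit as a part carrying $(Z^{\epsilon,w_2}_{s-}-1)$ plus a part carrying the difference of the ratios, and dominating it by the uniform bound $\sup_{\epsilon\le1}\mathbb{E}^1|(Z^{\epsilon,w_2}_t-1)/\epsilon|^2<\infty$ of Lemma 2.2 (which also gives $Z^{\epsilon,w_2}_t\to1$ in $L^2$, hence the uniform integrability needed to move the limit through $\mathbb{E}^1$ in $f(L_t)\frac{Z^{\epsilon,w_2}_t-1}{\epsilon}$ with $f$ bounded). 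For the first term, since $|f(L^\epsilon_t)-f(L_t)|\le\|\nabla f\|_\infty\,\epsilon\big|\int_0^t\int_\mathbb{H}V(s)N^1(dz,ds)\big|$, the quotient is dominated in $L^1$ and converges to $D_Vf(L_t)=\langle\nabla f(L_t),\int_0^t\int_\mathbb{H}V(s)N^1(dz,ds)\rangle$, while its product with $(Z^{\epsilon,w_2}_t-1)\to0$ is negligible. Two final points need care: Lemma 2.1 and Lemma 2.2 are stated for $V\in\mathscr{V}_1$ (and under $\rho\ge\delta$), so I would prove the identity there first and then extend to $V\in\mathscr{V}_2$ and remove the lower bound on $\rho$ by approximation, checking continuity of both sides and of $M_t$ in the relevant norm; and integrating the resulting conditional identity over $w_2$ against $\mathbb{P}^2$ finally produces (\ref{6}).
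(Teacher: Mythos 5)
Your Step 1 coincides, in substance, with the paper's Step 1: fix $w_2\in\Omega_2$, invoke Lemma 2.1 to get $\mathbb{E}^1 f(L^{w_2}_t)=\mathbb{E}^1\{f(L^{\epsilon,w_2}_t)Z^{\epsilon,w_2}_t\}$, differentiate at $\epsilon=0$, and identify $\lim_{\epsilon\to0}(\lambda^{\epsilon,w_2}(s,z)-1)/\epsilon=\langle z,Q^{-1}V(s)\rangle+\langle\nabla\log\rho(z),V(s)\rangle$; your two-term splitting of the difference quotient differs only cosmetically from the paper's three-term splitting $\frac{f(L^{\epsilon}_t)-f(L_t)}{\epsilon}+\frac{f(L^{\epsilon}_t)(Z^{\epsilon,w_2}_t-1-\epsilon M^{w_2}_t)}{\epsilon}+f(L^{\epsilon}_t)M^{w_2}_t$, and your use of the uniform $L^2$ bound of Lemma 2.2 for uniform integrability, followed by integration over $w_2$ against $\mathbb{P}^2$, is exactly the paper's route to its display $\lim_{\epsilon\to0}\mathbb{E}^1|Z^{\epsilon,w_2}_t-1-\epsilon M^{w_2}_t|/\epsilon=0$. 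Your closing truncation $V_n(t)=V(t)I_{[0,n]}(|Q^{-1}V(t)|)$ to pass from $\mathscr{V}_1$ to $\mathscr{V}_2$ is likewise the paper's Step 3.

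The genuine gap is the clause ``remove the lower bound on $\rho$ by approximation, checking continuity of both sides.'' Replacing $\rho$ by an approximant bounded below changes the L\'evy measure and hence the law of the driving process $L^1$ itself; both sides of the identity are expectations of path functionals of that process, with a predictable (hence path-dependent) $V$, so there is no fixed probability space on which ``continuity of both sides in $\rho$'' is even meaningful until you construct a coupling between the two processes --- and that coupling is the actual content of the paper's Step 2, which your plan omits. The paper enlarges the space by independent compound Poisson processes $L^{1,n}$ with L\'evy measure $\frac{1}{n}\mu(dz)$ and applies Step 1 to the superposition $L^1_t+L^{1,n}_t$, whose jump density $\rho(z)+\frac{1}{n}$ is bounded below by $\frac{1}{n}$; the resulting formula carries $M^n_t$ built from $\nabla\rho(z)/(\rho(z)+\frac{1}{n})$ and the compensated measure of $N^1+N^{1,n}$, and explicit first-moment estimates of order $\frac{1}{n}$ (using $\int_\mathbb{H}|z|\mu(dz)<\infty$ for the Gaussian $\mu$, and dominated convergence for $\int_\mathbb{H}|\nabla\rho(z)|\,\big|\frac{\rho(z)}{\rho(z)+1/n}-1\big|\mu(dz)$) allow $n\to\infty$. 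Without this (or an equivalent) device your argument does not reach the stated generality: hypothesis (\textbf{H1}) only gives $\rho>0$, not $\rho\geq\delta$, and Lemma 2.2 --- on which your entire limiting analysis rests --- is proved only under the lower bound.
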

\begin{proof}
We give the proof in three steps.\\
Step 1. Assume $V\in\mathscr{V}_1$ and $\rho\geq\delta$ for some $\delta>0$. By virtue of Lemma 2.1, for any fixed $w_2\in\Omega_2$
and $\epsilon\in(0,1)$,
we have
\begin{align*}
\mathbb{E}^1f(L^{w_2}_t)=\mathbb{E}^1\Big\{f(L^{\epsilon,w_2}_t)Z_t^{\epsilon,w_2}\Big\}.
\end{align*}
Therefore,
\begin{align*}
\mathbb{E}^1\frac{f(L^{\epsilon,w_2}_t)Z_t^{\epsilon,w_2}-f(L^{w_2}_t)}{\epsilon}=0.
\end{align*}
Furthermore,
{\begin{align}
\label{7}
\mathbb{E}^1\frac{f(L^{\epsilon,w_2}_t)-f(L_t)}{\epsilon}
+\mathbb{E}^1\frac{f(L^{\epsilon,w_2}_t)(Z_t^{\epsilon,w_2}-1-\epsilon M^{w_2}_t)}{\epsilon}
+\mathbb{E}^1f(L^{\epsilon,w_2}_t)M^{w_2}_t=0,
\end{align}}
where
\begin{align*}
M^{w_2}_t=\int_0^t\int_\mathbb{H}\Big(\langle z, Q^{-1}V^{w_2}(s)\rangle
+\langle\nabla\log\rho(z), V^{w_2}(s)\rangle\Big)\widetilde{N^1}(dz,ds).
\end{align*}
Taking expectation w.r.t. $\mathbb{P}^2$ in both sides of ($\ref{7}$), we have
{\begin{align}
\label{52}
\mathbb{E}\frac{f(L^{\epsilon}_t)-f(L_t)}{\epsilon}
+\int_{\Omega_2}\mathbb{E}^1\frac{f(L^{\epsilon,w_2}_t)(Z_t^{\epsilon,w_2}-1-\epsilon M^{w_2}_t)}{\epsilon}\mathbb{P}^2(dw_2)
+\mathbb{E}f(L^\epsilon_t)M_t=0.
\end{align}}
The Definition 2.1 implies
\begin{align}
\label{8}
\lim\limits_{\epsilon\rightarrow0}\mathbb{E}\frac{f(L^\epsilon_t)-f(L_t)}{\epsilon}=\mathbb{E}D_Vf(L_t).
\end{align}
Moreover,
\begin{align}
\label{9}
\lim\limits_{\epsilon\rightarrow0}\mathbb{E}\{f(L^\epsilon_t)M_t\}=\mathbb{E}\{f(L_t)M_t\}.
\end{align}
Therefore, it is sufficient to prove
\begin{align*}
\lim\limits_{\epsilon\rightarrow0}\mathbb{E}^1\frac{f(L^{\epsilon,w_2}_t)(Z_t^{\epsilon,w_2}-1-\epsilon M^{w_2}_t)}{\epsilon}=0.
\end{align*}
By $(\ref{3})$ and the fact that $\lambda<\infty$, one has
\begin{align*}
\lim\limits_{\epsilon\rightarrow0}\frac{Z_t^{\epsilon,w_2}-1}{\epsilon}
=&\lim\limits_{\epsilon\rightarrow0}\int_0^t\int_\mathbb{H}\frac{Z_{s-}^{\epsilon,w_2}
(\lambda^{\epsilon,w_2}(s,z)-1)}{\epsilon}\widetilde{N^1}(dz,ds)\\
=&\int_0^t\int_\mathbb{H}\lim\limits_{\epsilon\rightarrow0}\frac{Z_{s-}^{\epsilon,w_2}
(\lambda^{\epsilon,w_2}(s,z)-1)}{\epsilon}\widetilde{N^1}(dz,ds)\\
=&\int_0^t\int_\mathbb{H}\frac{d}{d\epsilon}\Big|_{\epsilon=0}
\Big(\varphi(z,\epsilon V^{w_2}(s))\frac{\rho(z+\epsilon V^{w_2}(s))}{\rho(z)}\Big)\widetilde{N^1}(dz,ds)\\
=&\int_0^t\int_\mathbb{H}\Big(\langle z, Q^{-1}V^{w_2}(s)\rangle+\langle\nabla\log\rho(z), V^{w_2}(s)\rangle\Big)
\widetilde{N^1}(dz,ds).
\end{align*}
Combining this with Lemma 2.2, we derive
\begin{align}
\label{10}\lim\limits_{\epsilon\rightarrow0}\mathbb{E}^1\frac{|Z_t^{\epsilon,w_2}-1-\epsilon M^{w_2}_t|}{\epsilon}=0.
\end{align}
By the dominated convergence theorem and ($\ref{52}$)-($\ref{10}$), we obtain ($\ref{6}$).

Step 2. Assume $V\in\mathscr{V}_1$.
For each $n\in\mathbb{N}$, let $\mathbb{P}^{1,n}$ be a probability measure on path space $W_{1,n}$ such that the coordinate
process $L^{1,n}_t=w_{1,n,}(t)$ is a purely jump L\'evy process with characteristic measure $\frac{1}{n}\mu(dz)$.
The associated jump measure is denoted by $N^{1,n}$.
Define
\begin{align*}
\hat{\Omega}=\prod_{n=1}^\infty W_{1,n}\times\Omega,~~\hat{\mathbb{P}}=\prod_{n=1}^\infty \mathbb{P}^{1,n}\times\mathbb{P},
\end{align*}
and
\begin{align*}
L^n_t=L^{1,n}_t+L_t.
\end{align*}
Then the jump measure and characteristic measure of $L^1_t+L^{1,n}_t$ are $N_n(dz,ds):=N^1(dz,ds)+N^{1,n}(dz,ds)$ and
$(\rho(z)+\frac{1}{n})\mu(dz)$ respectively.
By Step 1, for $V\in\mathscr{V}_1$
\begin{align}
\label{17}
\hat{\mathbb{E}}D_Vf(L^n_t)=-\hat{\mathbb{E}}\{f(L^n_t)M^n_t\},
\end{align}
where
\begin{align*}
M^n_t=\int_0^t\int_\mathbb{H}\Big(\langle z, Q^{-1}V(s)\rangle+\langle\frac{\nabla\rho(z)}{\rho(z)+\frac{1}{n}}, V(s)\rangle\Big)
\widetilde{N_n}(dz,ds).
\end{align*}
Note that,
\begin{align*}
&|\hat{\mathbb{E}}D_Vf(L^n_t)-\mathbb{E}D_Vf(L_t)|\cr
=&|\hat{\mathbb{E}}D_Vf(L^n_t)-\hat{\mathbb{E}}D_Vf(L_t)|\cr
=&\Big|\hat{\mathbb{E}}\langle\nabla f(L^n_t),\int_0^t\int_\mathbb{H}V(s)N_n(dz,ds)\rangle
-\hat{\mathbb{E}}\langle\nabla f(L_t),\int_0^t\int_\mathbb{H}V(s)N^1(dz,ds)\rangle\Big|\cr
\leq&\Big|\hat{\mathbb{E}}\langle\nabla f(L^n_t),\int_0^t\int_\mathbb{H}V(s)N_n(dz,ds)\rangle
-\hat{\mathbb{E}}\langle\nabla f(L^n_t),\int_0^t\int_\mathbb{H}V(s)N^1(dz,ds)\rangle\Big|\cr
&+\Big|\hat{\mathbb{E}}\langle\nabla f(L^n_t),\int_0^t\int_\mathbb{H}V(s)N^1(dz,ds)\rangle
-\hat{\mathbb{E}}\langle\nabla f(L_t),\int_0^t\int_\mathbb{H}V(s)N^1(dz,ds)\rangle\Big|\cr
\leq&\|\nabla f\|_\infty\hat{\mathbb{E}}\int_0^t\int_\mathbb{H}|V(s)|N^{1,n}(dz,ds)\cr
&+\|\nabla^2f\|_\infty\mathbb{E}\Big\{\int_0^t\int_\mathbb{H}|z|N^{1,n}(dz,ds)\int_0^t\int_\mathbb{H}|V(s)|N^1(dz,ds)\Big\}\cr
\leq&\frac{1}{n}\Big\{\|\nabla f\|_\infty\|V\|_\infty t+\|\nabla^2f\|_\infty\|V\|_\infty\lambda\int_\mathbb{H}|z|\mu(dz)t^2\Big\}
\rightarrow0,~~~as~~n\rightarrow\infty,
\end{align*}
and
\begin{align*}
|\hat{\mathbb{E}}\{f(L^n_t)M^n_t\}-\mathbb{E}\{f(L_t)M_t\}|=&|\hat{\mathbb{E}}\{f(L^n_t)M^n_t\}-\hat{\mathbb{E}}\{f(L_t)M_t\}|\cr
\leq&|\hat{\mathbb{E}}\{f(L^n_t)M^n_t\}-\hat{\mathbb{E}}\{f(L^n_t)M_t\}|\cr
&+|\hat{\mathbb{E}}\{f(L^n_t)M_t\}-\hat{\mathbb{E}}\{f(L_t)M_t\}|\cr
:=&I_1+I_2.
\end{align*}
As for $I_1$, we have
\begin{align*}
I_1\leq&\|f\|_\infty\hat{\mathbb{E}}|M^n_t-M_t|\cr
=&\|f\|_\infty\hat{\mathbb{E}}\Big|\int_0^t\int_\mathbb{H}\Big(\langle z, Q^{-1}V(s)\rangle
+\langle\frac{\nabla\rho(z)}{\rho(z)+\frac{1}{n}}, V(s)\rangle\Big)\widetilde{N_n}(dz,ds)\cr
&~~~~~~~~~~~-\int_0^t\int_\mathbb{H}\Big(\langle z, Q^{-1}V(s)\rangle
+\langle\frac{\nabla\rho(z)}{\rho(z)}, V(s)\rangle\Big)\widetilde{N^1}(dz,ds)\Big|\cr
\leq&\|f\|_\infty\Bigg\{\hat{\mathbb{E}}\Big|\int_0^t\int_\mathbb{H}\langle z, Q^{-1}V(s)\rangle\widetilde{N^{1,n}}(dz,ds)\Big|\cr
&~~~~~~~~+\hat{\mathbb{E}}\Big|\int_0^t\int_\mathbb{H}\langle\frac{\nabla\rho(z)}{\rho(z)+\frac{1}{n}}-\frac{\nabla\rho(z)}{\rho(z)},
V(s)\rangle\widetilde{N^1}(dz,ds)\Big|\cr
&~~~~~~~~+\hat{\mathbb{E}}\Big|\int_0^t\int_\mathbb{H}\langle\frac{\nabla\rho(z)}{\rho(z)+\frac{1}{n}},V(s)\rangle\widetilde{N^{1,n}}(dz,ds)\Bigg\}\cr
\leq&2\|f\|_\infty\Bigg\{\frac{t}{n}\|Q^{-1}V\|_\infty\int_\mathbb{H}|z|\mu(dz)
+t\|V\|_\infty\int_\mathbb{H}\Big|\frac{\rho(z)}{\rho(z)+\frac{1}{n}}-1\Big||\nabla\rho(z)|\mu(dz)\cr
&~~~~~~~~~~+t\int_\mathbb{H}\frac{|\nabla\rho(z)|}{n\rho(z)+1}\mu(dz)\Bigg\}\rightarrow0,~~as~~n\rightarrow\infty.
\end{align*}
Meanwhile,
\begin{align*}
I_2=&|\hat{\mathbb{E}}\{f(L^n_t)M_t\}-\hat{\mathbb{E}}\{f(L_t)M_t\}|\cr
\leq&\|\nabla f\|_\infty\hat{\mathbb{E}}|L^{1,n}_t||M_t|\cr
\leq&\|\nabla f\|_\infty\hat{\mathbb{E}}|L^{1,n}_t|\hat{\mathbb{E}}|M_t|\cr
\leq&\|\nabla f\|_\infty\frac{t}{n}\int_\mathbb{H}|z|\mu(dz)\hat{\mathbb{E}}|M_t|\rightarrow0,~~as~~n\rightarrow\infty.
\end{align*}
Considering above estimates and letting $n\rightarrow\infty$ in ($\ref{17}$), we get ($\ref{6}$) for $V\in\mathscr{V}_1$.

Step 3. Assume $V\in\mathscr{V}_2$. For $n\in\mathbb{N}$, define
\begin{align*}
V_n(t)=V(t)I_{[0,n]}(|Q^{-1}V(t)|),~~t\geq0.
\end{align*}
By Step 2, one arrives at
\begin{align}
\label{12}
\mathbb{E}D_{V_n}f(L_t)=-\mathbb{E}\{f(L_t)M^n_t\},
\end{align}
for $M^n_t=\int_0^t\int_\mathbb{H}\Big(\langle z, Q^{-1}V_n(s)\rangle+\langle\nabla\log\rho(z), V_n(s)\rangle\Big)\widetilde{N^1}(dz,ds)$.
It is easy to check that
\begin{align*}
\mathbb{E}|D_{V_n}f(L_t)-D_Vf(L_t)|\rightarrow0\text{ and }\mathbb{E}|M^n_t-M_t|\rightarrow0,~~n\rightarrow\infty,
\end{align*}
Therefore, let $n\rightarrow\infty$ in $(\ref{12})$ and we finish the proof.
\end{proof}

\section{Proofs of Main Results.}
In this section, we would like to give the proofs of main results. Denote
\begin{align*}
C_b^2(\mathbb{H}\rightarrow\mathbb{H})=&\Big\{G:\mathbb{H}\rightarrow\mathbb{H}\big|
G\text{ is bounded, differential, with bounded} \\&\text{ ~~~~~~~~~~~~~~~~~~and continuous derivatives up to order }2.\Big\}.
\end{align*}
Before we move on, it is necessary for us to prove the existence of $L^1$-derivative of ($\ref{2}$).
\begin{prp}
Assume $A$ generates a $C_0$-semigroup $\{S(t)\}_{t\geq0}$ and $F\in C_b^2(\mathbb{H\rightarrow\mathbb{H}})$.
If a predictable process $V$ satisfies $\mathbb{E}\int_0^T|V(s)|^2ds<\infty$,
then $X_t$ has an $L^1$-derivative in direction $V$. Moreover, the $L^1$-derivative satisfies
\begin{equation}\label{18}
\begin{cases}
dD_VX_t=AD_VX_tdt+\nabla F(X_t)D_VX_tdt+\int_{\mathbb{H}}V(s)N^1(dz,dt)\\
D_VX_0=0.
\end{cases}
\end{equation}
\end{prp}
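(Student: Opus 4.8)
The plan is to work with the mild formulation, guess that $D_V X_t$ is the solution $\eta_t$ of a linear random mild equation, and then prove $L^1$-convergence of the difference quotient to $\eta_t$ by a Gronwall argument run in $L^2(\mathbb{P})$ (which is the natural space given the hypothesis $\mathbb{E}\int_0^T|V(s)|^2ds<\infty$). Since the perturbed driving process is $L^\epsilon_t=L_t+\epsilon\int_0^t\int_{\mathbb{H}}V(s)N^1(dz,ds)$, the perturbed mild solution $X^\epsilon$ satisfies, with $I_t:=\int_0^t\int_{\mathbb{H}}S(t-s)V(s)N^1(dz,ds)$,
\[
X^\epsilon_t-X_t=\int_0^t S(t-s)\bigl(F(X^\epsilon_s)-F(X_s)\bigr)ds+\epsilon I_t,
\]
and the candidate derivative $\eta_t:=D_VX_t$ should solve the linearized mild equation
\[
\eta_t=\int_0^t S(t-s)\nabla F(X_s)\eta_s\,ds+I_t,
\]
which is exactly the mild form of $(\ref{18})$.

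First I would record the a priori bound $\mathbb{E}|I_t|^2<\infty$. Writing $I_t$ as its compensated part plus drift and setting $M:=\sup_{t\le T}\|S(t)\|<\infty$, the It\^o isometry for the jump integral together with $({\bf H1})$ gives $\mathbb{E}\bigl|\int_0^t\int_{\mathbb{H}}S(t-s)V(s)\widetilde{N^1}(dz,ds)\bigr|^2\le \lambda M^2\,\mathbb{E}\int_0^t|V(s)|^2ds$, while the drift part is controlled by Cauchy--Schwarz; both are finite. Since $\nabla F$ is bounded and $S$ is uniformly bounded on $[0,T]$, a standard Picard/Banach fixed-point argument then yields existence, uniqueness, and the bound $\sup_{t\le T}\mathbb{E}|\eta_t|^2<\infty$ for the linearized equation.

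The core is the comparison of $U^\epsilon_t:=(X^\epsilon_t-X_t)/\epsilon$ with $\eta_t$. By the fundamental theorem of calculus, $F(X^\epsilon_s)-F(X_s)=\epsilon A^\epsilon_s U^\epsilon_s$ with $A^\epsilon_s:=\int_0^1\nabla F\bigl(X_s+\theta(X^\epsilon_s-X_s)\bigr)d\theta$, so that
\[
U^\epsilon_t-\eta_t=\int_0^t S(t-s)A^\epsilon_s\bigl(U^\epsilon_s-\eta_s\bigr)ds+\int_0^t S(t-s)\bigl(A^\epsilon_s-\nabla F(X_s)\bigr)\eta_s\,ds.
\]
Using $\|A^\epsilon_s\|\le\|\nabla F\|_\infty$ and $\|S(t-s)\|\le M$, taking expectations gives
\[
\mathbb{E}|U^\epsilon_t-\eta_t|\le M\|\nabla F\|_\infty\int_0^t\mathbb{E}|U^\epsilon_s-\eta_s|\,ds+R^\epsilon_t,
\]
where $R^\epsilon_t:=M\int_0^t\mathbb{E}\bigl[\|A^\epsilon_s-\nabla F(X_s)\|\,|\eta_s|\bigr]ds$. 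By Gronwall it then suffices to show $R^\epsilon_T\to 0$ as $\epsilon\to 0$.

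To close this, I would exploit that $F\in C^2_b$ makes $\nabla F$ Lipschitz, giving $\|A^\epsilon_s-\nabla F(X_s)\|\le\tfrac12\|\nabla^2F\|_\infty|X^\epsilon_s-X_s|$, so Cauchy--Schwarz bounds $R^\epsilon_T$ by $C\int_0^T(\mathbb{E}|X^\epsilon_s-X_s|^2)^{1/2}(\mathbb{E}|\eta_s|^2)^{1/2}ds$. Next I would prove $\sup_{s\le T}\mathbb{E}|X^\epsilon_s-X_s|^2\le C\epsilon^2\to 0$: from the displayed identity for $X^\epsilon_t-X_t$, the Lipschitz bound on $F$, Cauchy--Schwarz, the bound $\mathbb{E}|I_t|^2<\infty$, and Gronwall, this follows at once. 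Combined with $\sup_{s\le T}\mathbb{E}|\eta_s|^2<\infty$ this forces $R^\epsilon_T\le C'\epsilon\to 0$, hence $\mathbb{E}|U^\epsilon_t-\eta_t|\to 0$; that is, $X_t$ has $L^1$-derivative $D_VX_t=\eta_t$ solving $(\ref{18})$. The hard part is precisely the error term $R^\epsilon_t$: controlling it is where the $C^2$-regularity of $F$ (Lipschitz $\nabla F$) and the decision to run the whole estimate in $L^2(\mathbb{P})$ are essential, since together they convert the estimate into the quantitative rate $O(\epsilon)$ and bypass any delicate subsequence or dominated-convergence argument.
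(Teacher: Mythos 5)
Your proposal is correct and follows essentially the same route as the paper: both identify $D_VX_t$ as the solution of the linearized mild equation, write the difference quotient via a Taylor/mean-value expansion of $F$ so that the error is quadratic in $|X^\epsilon_s-X_s|$ (using $\|\nabla^2F\|_\infty$ from $F\in C^2_b$), and close with two Gronwall arguments yielding the same $O(\epsilon)$ rate. The only difference is cosmetic: the paper runs Gronwall pathwise and takes expectations at the end, obtaining convergence of $\mathbb{E}\sup_{s\leq t}|\frac{X^\epsilon_s-X_s}{\epsilon}-D_VX_s|$, whereas you run Gronwall at the level of expectations with a Cauchy--Schwarz splitting, which gives the pointwise-in-$t$ $L^1$-convergence that Definition 2.1 requires.
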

\begin{proof}
By classical results of SPDEs, the solution of Eq. ($\ref{18}$) admits a unique solution
\begin{align}
\label{13}
D_VX_t=\int_0^tS(t-s)\nabla F(X_s)D_VX_sds+\int_0^t\int_\mathbb{H}V(s)N^1(dz,ds).
\end{align}
Now we aim to prove $D_VX_t$ is the $L^1$-derivative of $X_t$. It is easy to check the integrability of $D_VX_t$.
We shall prove
\begin{align}
\label{16}
E\Big\{\sup\limits_{s\leq t}\Big|\frac{X_s^{\epsilon}-X_s}{\epsilon}-D_VX_s\Big|\Big\}\rightarrow0, ~~\epsilon\rightarrow0.
\end{align}
In fact,
\begin{align*}
X_t^{\epsilon}
=&x+\int_0^tS(t-s)F(X_s^{\epsilon})ds+\int_0^tS(t-s)dL^\epsilon_s\\
=&x+\int_0^tS(t-s)F(X_s^{\epsilon})ds+\int_0^tS(t-s)dL_s+\epsilon\int_0^t\int_\mathbb{H}S(t-s)V(s)N^1(dz,ds),
\end{align*}
then
\begin{align}
\label{14}
X_t^\epsilon-X_t=\int_0^tS(t-s)(F(X_s^\epsilon)-F(X_s))ds+\epsilon\int_0^t\int_{\mathbb{H}}S(t-s)V(s)N^1(dz,ds).
\end{align}
Therefore,
\begin{align*}
|X_t^\epsilon-X_t|&\leq\int_0^t|S(t-s)(F(X_s^\epsilon)-F(X_s))|ds+\epsilon\int_0^t\int_{\mathbb{H}}|S(t-s)V(s)|N^1(dz,ds)\\
&\leq C\|\nabla F\|_{\infty}e^{\delta t}\int_0^te^{-\delta s}|X_s^\epsilon-X_s|ds
+\epsilon Ce^{\delta t}\int_0^t\int_{\mathbb{H}}e^{-\delta s}|V(s)|N^1(dz,ds).
\end{align*}
Furthermore,
\begin{align*}
&\sup\limits_{s\leq t}\Big\{e^{-\delta s}|X_s^\epsilon-X_s|\Big\}\\
&\leq C\|\nabla F\|_{\infty}\int_0^t\sup\limits_{r\leq s}\Big\{e^{-\delta r}|X_r^\epsilon-X_r|\Big\}ds
+\epsilon C\int_0^t\int_{\mathbb{H}}e^{-\delta s}|V(s)|N^1(dz,ds).
\end{align*}
Using Gronwall's inequality, one obtains
\begin{align*}
\sup\limits_{s\leq t}\Big\{e^{-\delta s}|X_s^\epsilon-X_s|\Big\}
\leq\epsilon C\exp\{C\|\nabla F\|_{\infty}t\}\int_0^t\int_{\mathbb{H}}e^{-\delta s}|V(s)|N^1(dz,ds),
\end{align*}
which yields
\begin{align}
\label{15}
\sup\limits_{s\leq t}|X_s^\epsilon-X_s|\leq\epsilon C\exp\{\delta t+C\|\nabla F\|_{\infty}t\}\int_0^t\int_{\mathbb{H}}e^{-\delta s}|V(s)|N^1(dz,ds).
\end{align}
From $(\ref{13})$, $(\ref{14})$ and Taylor's formula, it follows  that
\begin{align*}
&\Big|\frac{X_t^\epsilon-X_t}{\epsilon}-D_VX_t\Big|\\
\leq&C\int_0^te^{\delta(t-s)}\Big|\frac{F(X_s^\epsilon)-F(X_s)}{\epsilon}-\nabla F(X_s)D_VX_s\Big|ds\\
\leq&C\int_0^te^{\delta(t-s)}\Big\{\|\nabla F\|_{\infty}\Big|\frac{X_s^\epsilon-X_s}{\epsilon}-D_VX_s\Big|
+\|\nabla^2F\|_{\infty}\frac{|X_s^\epsilon-X_s|^2}{\epsilon}\Big\}ds.
\end{align*}
By the similar argument above, we have
\begin{align*}
&\sup\limits_{s\leq t}\Big|\frac{X_s^\epsilon-X_s}{\epsilon}-D_VX_s\Big|\\
\leq&C\|\nabla^2F\|_\infty t\exp\{C\|\nabla F\|_\infty t+\delta t\}\sup\limits_{s\leq t}\frac{|X_s^\epsilon-X_s|^2}{\epsilon}.
\end{align*}
Combining it with ($\ref{15}$), one arrives at
\begin{align*}
\mathbb{E}\Big\{\sup\limits_{s\leq t}\Big|\frac{X_s^\epsilon-X_s}{\epsilon}-D_VX_s\Big|\Big\}
\leq&\hat{C}_1\epsilon\mathbb{E}\Big\{\int_0^t\int_{\mathbb{H}}|V(s)|N^1(dz,ds)\Big\}^2\\
\leq&\hat{C}_2\epsilon.
\end{align*}
where $\hat{C}_1$ and $\hat{C}_2$ are constants independent of $\epsilon$.
\end{proof}
Let $J_t=\nabla_xX^x_t$. For $s\leq t$, let $J_{st}$ be the solution of following equation:
\begin{equation}\label{19}
\begin{cases}
dJ_{st}=AJ_{st}dt+\nabla F(X_t)J_{st}dt,\\
J_{ss}=I.
\end{cases}
\end{equation}
Then it can be proved that
\begin{align*}
J_t=J_{0t}=J_{st}J_s
\end{align*}
By ($\ref{13}$) and ($\ref{19}$), we can deduce
\begin{align}
\label{20}
D_VX_t=\int_0^t\int_{\mathbb{H}}J_{st}V(s)N(dz,ds).
\end{align}

\noindent{\bf{Proof of Theorem 1.1.}}We show the proof in two steps.\\
Step1: Assume $F\in C_b^2(\mathbb{H}\rightarrow\mathbb{H})$. By ($\ref{19}$), one has
\begin{align*}
J_t=S(t)+\int_0^tS(t-s)\nabla F(X_s)J_sds.
\end{align*}
Then
\begin{align*}
\|J_t\|\leq e^{-\gamma_1t}+\|\nabla F\|_{\infty}\int_0^t\|J_s\|e^{-\gamma_1(t-s)}ds.
\end{align*}
Gronwall's inequality implies
\begin{align}
\label{24}
\|J_t\|\leq\exp\{(-\gamma_1+\|\nabla F\|_{\infty})t\}.
\end{align}
Observe that
\begin{align}
\label{25}
Q^{-1}J_t&=Q^{-1}S(t)+Q^{-1}\int_0^tS(t-s)\nabla F(X_s)J_sds\cr
&=Q^{-1}S(t)+\lim\limits_{\epsilon\rightarrow0}(Q+\epsilon I)^{-1}\int_0^tS(t-s)\nabla F(X_s)J_sds\cr
&=Q^{-1}S(t)+\lim\limits_{\epsilon\rightarrow0}\int_0^t(Q+\epsilon I)^{-1}S(t-s)\nabla F(X_s)J_sds\cr
&=Q^{-1}S(t)+\int_0^t\lim\limits_{\epsilon\rightarrow0}(Q+\epsilon I)^{-1}S(t-s)\nabla F(X_s)J_sds\cr
&=Q^{-1}S(t)+\int_0^tQ^{-1}S(t-s)\nabla F(X_s)J_sds,
\end{align}
where in the forth equality we use the dominated convergence theorem. By ($\ref{24}$) and ($\ref{25}$), we get
\begin{align*}
\int_0^t\|Q^{-1}J_s\|ds
&\leq\int_0^t\|Q^{-1}S(s)\|ds+\int_0^t\int_0^s\|Q^{-1}S(s-r)\|\|\nabla F(X_r)\|\|J_r\|drds\\
&\leq\Big(1+t\|\nabla F\|_{\infty}\Big)\int_0^t\|Q^{-1}S(s)\|ds<\infty.
\end{align*}
Taking $V(s)=J_s\xi$ in ($\ref{20}$), we obtain $D_VX_t= N^1_tJ_t\xi$, and
\begin{align}
\label{21}
\frac{I_{[N^1_t\geq1]}}{N^1_t}D_VX_t=J_tI_{[N^1_t\geq1]}\xi.
\end{align}
Since $D_VN^1_t=0$, then
\begin{align}
\label{22}
D_V\big\{\frac{I_{[N^1_t\geq1]}}{N^1_t}\big\}=0.
\end{align}
It follows from ($\ref{21}$),  ($\ref{22}$) and ($\ref{6}$) that
\begin{align}
\label{23}
&\nabla_\xi P_t^1f(x)\cr
=&\nabla_\xi\mathbb{E}\Big\{f(X_t^x)I_{[N^1_t\geq1]}\Big\}\cr
=&\mathbb{E}\langle\nabla f(X_t^x),J_t\xi I_{[N^1_t\geq1]}\rangle\cr
=&\mathbb{E}\langle\nabla f(X_t^x),\frac{I_{[N^1_t\geq1]}}{N^1_t}D_VX^x_t\rangle\cr
=&\mathbb{E}\Big\{D_Vf(X^x_t)\frac{I_{[N^1_t\geq1]}}{N^1_t}\Big\}\cr
=&\mathbb{E}\Big\{D_V\big(f(X^x_t)\frac{I_{[N^1_t\geq1]}}{N^1_t}\big)\Big\}\cr
=&-\mathbb{E}\Big\{f(X_t^x)\frac{I_{[N^1_t\geq1]}}{N^1_t}
\int_0^t\int_\mathbb{H}\Big(\langle z, Q^{-1}J_s\xi\rangle+\langle\nabla\log\rho(z), J_s\xi\rangle\Big)\widetilde{N^1}(dz,ds)\Big\}.
\end{align}
Step2: Assume $F\in C^1_b(\mathbb{H}\rightarrow\mathbb{H})$ and $\nabla F$ is Lipschitz continuous.
We aim to construct approximation sequence $\{F_k\}_{k\geq1}\subset C^2_b(\mathbb{H}\rightarrow\mathbb{H})$ such that
$F_k\rightarrow F$ and $\nabla F_k\rightarrow\nabla F$ in pointwise sense as $k\rightarrow\infty$.
For $k\geq1$, we take a sequence of non-negative, twice differential function $\{g_k\}_{k\geq1}$ such that
\begin{align*}
{\rm{Supp}}\{g_k\}\subset\{y\in\mathbb{R}^k:|y|_{\mathbb{R}^k}\leq\frac{1}{k}\}
\end{align*}
and
\begin{align*}
\int_{\mathbb{R}^k}g_k(y)dy=1.
\end{align*}
Identifying $\mathbb{R}^k$ with span$\{e_1,\cdots,e_k\}$, we define
\begin{align}
\label{43}
F_k(x)=\int_{\mathbb{R}^k}g_k(y-\Pi_kx)F\Big(\sum\limits_{i=1}^ky_ie_i\Big)dy,
\end{align}
then $F_k$ is a twice differentiable function with  bounded and continuous derivatives. Moreover,
\begin{align}
\nabla F_k(x)=\int_{\mathbb{R}^k}g_k(y)\nabla F\Big(\sum\limits_{i=1}^ky_ie_i+\Pi_kx\Big)\Pi_kdy.
\end{align}
For any $x,\widetilde{x}\in\mathbb{H}$,
\begin{align*}
&\|\nabla F_k(x)-\nabla F_k(\widetilde{x})\|\cr
=&\Big\|\int_{\mathbb{R}^k}g_k(y)\Big[\nabla F\Big(\sum\limits_{i=1}^ky_ie_i+\Pi_kx\Big)
-\nabla F\Big(\sum\limits_{i=1}^ky_ie_i+\Pi_k\widetilde{x}\Big)\Big]\Pi_kdy\Big\|\cr
\leq&\int_{\mathbb{R}^k}g_k(y)\|\nabla F\|_{Lip}|x-\widetilde{x}|dy\cr
\leq&\|\nabla F\|_{Lip}|x-\widetilde{x}|,
\end{align*}
where $\|\nabla F\|_{Lip}$ denotes the smallest Lipschitz constant.
This implies
\begin{align}
\label{37}
\sup\limits_{k\geq1}\|\nabla^2F_k\|_{\infty}\leq\|\nabla F\|_{Lip}.
\end{align}
Consider the following equation for any $k\geq1$,
\begin{equation}
\label{34}
\begin{cases}
dX^k_t=AX^k_tdt+F_k(X^k_t)dt+dL_t,\\
X^k_0=x.
\end{cases}
\end{equation}
Let $\{X^k_t\}_{t\geq0}$ be the solution of Eq.(\ref{34}) and $\{J^k_t\}_{t\geq0}$ be its derivative w.r.t. the initial value.
Then $\{J^k_t\}_{t\geq0}$ satisfies
\begin{equation}
\label{35}
\begin{cases}
dJ^k_t=AJ^k_tdt+\nabla F_k(X^k_t)J^k_tdt,\\
J^k_0=I.
\end{cases}
\end{equation}
Since
\begin{align*}
|X_t^k-X_t|=&\Big|\int_0^tS(t-s)\big(F_k(X^k_s)-F(X_s)\big)ds\Big|\cr
\leq&\Big|\int_0^tS(t-s)\big(F_k(X^k_s)-F_k(X_s)\big)ds\Big|+\Big|\int_0^tS(t-s)\big(F_k(X_s)-F(X_s)\big)ds\Big|\cr
\leq&\int_0^te^{-\gamma_1(t-s)}\sup\limits_{k\geq1}\|\nabla F_k\|_{\infty}|X^k_s-X_s|ds+\int_0^te^{-\gamma_1(t-s)}\big|F_k(X_s)-F(X_s)\big|ds,
\end{align*}
then with the help of Gronwall's inequality and dominated convergence theorem, we obtain
\begin{align*}
&\lim\limits_{k\rightarrow\infty}|X_t^k-X_t|\cr
\leq&\exp\Big\{-(\gamma_1+\sup\limits_{k\geq1}\|\nabla F_k\|_{\infty})t\Big\}\lim\limits_{k\rightarrow\infty}
\int_0^te^{-\gamma_1(t-s)}\big|F_k(X_s)-F(X_s)\big|ds=0.
\end{align*}
Based on the above estimates, by ($\ref{24}$),we deduce
\begin{align}
\|J^k_t-J_t\|\leq&\int_0^t\|S(t-s)\|\|\nabla F_k(X_s^k)J_s^k-\nabla F(X_s)J_s\|ds\cr
\leq&\int_0^t\|S(t-s)\|\|\nabla F_k(X^k_s)J_s^k-\nabla F_k(X^k_s)J_s\|ds\cr
&+\int_0^t\|S(t-s)\|\|\nabla F_k(X^k_s)J_s-\nabla F_k(X_s)J_s\|ds\cr
&+\int_0^t\|S(t-s)\|\|\nabla F_k(X_s)J_s-\nabla F(X_s)J_s\|ds\cr
\leq&\sup\limits_{k\geq1}\|\nabla F_k\|_{\infty}\int_0^te^{-\gamma_1(t-s)}\|J^k_s-J_s\|ds\cr
&+\sup\limits_{k\geq1}\|\nabla^2F_k\|_{\infty}\int_0^te^{-\gamma_1t+s\|\nabla F\|_{\infty}}|X^k_s-X_s|ds\cr
&+\int_0^te^{-\gamma_1t+s\|\nabla F\|_{\infty}}\|\nabla F_k(X_s)-\nabla F(X_s)\|ds\cr
:=&I_1(k)+I_2(k)+I_3(k),
\end{align}
which implies
\begin{align*}
\|J^k_t-J_t\|\leq\exp\{-\gamma_1t+\sup\limits_{k\geq1}\|\nabla F_k\|_{\infty}t\}\Big(I_2(k)+I_3(k))\Big)\rightarrow0,~~k\rightarrow\infty.
\end{align*}
Define
\begin{align*}
P^{k,1}_tf(x)=\mathbb{E}\Big\{f(X^k_t)I_{[N^1_t\geq1]}\Big\}.
\end{align*}
By ($\ref{23}$), we have
\begin{align}
\label{38}
&\nabla P^{k,1}_tf(x)\cr
=&-\mathbb{E}\Big\{f(X_t^x)\frac{I_{[N^1_t\geq1]}}{N^1_t}
\int_0^t\int_\mathbb{H}\Big(\langle z, Q^{-1}J^k_s\xi\rangle+\langle\nabla\log\rho(z), J^k_s\xi\rangle\Big)\widetilde{N^1}(dz,ds)\Big\}.
\end{align}
Note that as $k\rightarrow\infty$,
\begin{align*}
|\nabla P^{k,1}_tf(x)-\nabla P^1_tf(x)|\leq\|\nabla f\|_{\infty}\mathbb{E}\|J^k_t-J_t\|\rightarrow0,
\end{align*}
and
\begin{align*}
\mathbb{E}\Big|\int_0^t\int_\mathbb{H}\Big(\langle z, Q^{-1}\big(J^k_s-J_s\big)\xi\rangle+\langle\nabla\log\rho(z),
\big(J^k_s-J_s\big)\xi\rangle\Big)\widetilde{N^1}(dz,ds)\Big|\rightarrow0.
\end{align*}
We finish the proof by letting $k\rightarrow\infty$ in ($\ref{38}$).

\noindent{\bf{Proof of Theorem 1.2.}}\\
Step1: Assume $F\in C^2_b(\mathbb{H}\rightarrow\mathbb{H})$.
By Theorem 1.1,triangle inequality and H\"{o}lder inequality, one arrives at
\begin{align}
\label{26}
&|\nabla_\xi P_t^1f(x)|\cr
=&\Big|-\mathbb{E}\Big\{f(X_t^x)\frac{I_{[N^1_t\geq1]}}{N^1_t}\int_0^t\int_\mathbb{H}\Big(\langle z, Q^{-1}J_s\xi\rangle
+\langle\nabla\log\rho(z), J_s\xi\rangle\Big)\widetilde{N^1}(dz,ds)\Big\}\Big|\cr
\leq&\|f\|_{\infty}|\xi|\Bigg\{\Big\{\int_\mathbb{H}|z|^2\rho(z)\mu(dz)\mathbb{E}\frac{I_{[N^1_t\geq1]}}
{(N^1_t)^2}\int_0^t\mathbb{E}\|Q^{-1}J_s\|^2ds\Big\}^{\frac{1}{2}}\cr
&~~~~~~~~~+2\mathbb{E}\int_0^t\int_\mathbb{H}|\nabla\log\rho(z)|\|J_s\|\rho(z)\mu(dz)ds\Bigg\}.
\end{align}
Note that
\begin{align}
\label{27}
\mathbb{E}\frac{I_{[N_t\geq1]}}{(N^1_t)^2}
=&e^{-\lambda t}\sum_{n=1}^\infty\frac{(\lambda t)^n}{n^2n!}\cr
=&\frac{e^{-\lambda t}}{(\lambda t)^2}\sum_{n=1}^\infty\frac{(n+2)(n+1)}{n^2}\frac{(\lambda t)^{n+2}}{(n+2)!}\cr
\leq&6\frac{e^{-\lambda t}}{(\lambda t)^2}\sum_{n=1}^\infty\frac{(\lambda t)^{n+2}}{(n+2)!}\cr
\leq&\frac{6}{(\lambda t)^2},
\end{align}
and by $(\ref{24})$, ($\ref{25}$) and H\"{o}lder inequality, we have
\begin{align}
\label{28}
&\int_0^t\mathbb{E}\|Q^{-1}J_s\|^2ds\cr
=&\int_0^t\mathbb{E}\Big\|Q^{-1}S(s)+\int_0^sQ^{-1}S(s-r)\nabla F(X_r)J_rdr\Big\|^2ds\cr
\leq&2\int_0^t\|Q^{-1}S(s)\|^2ds+2\int_0^t\mathbb{E}\Big\|\int_0^sQ^{-1}S(s-r)\nabla F(X_r)J_rdr\Big\|^2ds\cr
\leq&2\|\nabla F\|_{\infty}^2\int_0^t\Big\{s\int_0^s\|Q^{-1}S(s-r)\|^2\exp\{-2(\gamma_1-\|\nabla F\|_{\infty})r\}dr\Big\}ds\cr
&~+2\int_0^t\|Q^{-1}S(s)\|^2ds\cr
=&2\|\nabla F\|_{\infty}^2\Gamma_t+2\int_0^t\|Q^{-1}S(s)\|^2ds.
\end{align}
where
\begin{align*}
\Gamma_t=\int_0^t\Big\{s\int_0^s\|Q^{-1}S(s-r)\|^2\exp\{-2(\gamma_1-\|\nabla F\|_{\infty})r\}dr\Big\}ds.
\end{align*}
In addition,
\begin{align}
\label{29}
&\mathbb{E}\int_0^t\int_\mathbb{H}|\nabla\log\rho(z)|\|J_s\|\rho(z)\mu(dz)ds\cr
\leq&\int_\mathbb{H}|\nabla\rho(z)|\mu(dz)\int_0^t\exp\{-(\gamma_1-\|\nabla F\|_{\infty})s\}ds\cr
\leq&\frac{\int_\mathbb{H}|\nabla\rho(z)|\mu(dz)}{\gamma_1-\|\nabla F\|_{\infty}}.
\end{align}
With the help of ($\ref{26}$)-($\ref{29}$), we can obtain
\begin{align*}
|\nabla_\xi P_t^1f(x)|
\leq&2\|f\|_{\infty}|\xi|\Bigg\{\sqrt{6}\Big\{\frac{\int_\mathbb{H}|z|^2\rho(z)\mu(dz)}{\lambda^2}
\frac{\|\nabla F\|_{\infty}^2\Gamma_t+\int_0^t\|Q^{-1}S(s)\|^2ds}{t^2}\Big\}^{\frac{1}{2}}\cr
&~~~~~~~~~~~~~~~~~~~~~~~~+\frac{\int_\mathbb{H}|\nabla\rho(z)|\mu(dz)}{\gamma_1-\|\nabla F\|_{\infty}}\Bigg\}.
\end{align*}
Furthermore,
\begin{align}
\label{36}
\|\nabla P_t^1f\|_{\infty}:=&\sup_{|\xi|\leq1}\sup_{x\in\mathbb{H}}|\nabla_\xi P_t^1f(x)|\cr
\leq&2\|f\|_{\infty}\Bigg\{\sqrt{6}\Big\{\frac{\int_\mathbb{H}|z|^2\rho(z)\mu(dz)}{\lambda^2}
\frac{\|\nabla F\|_{\infty}^2\Gamma_t+\int_0^t\|Q^{-1}S(s)\|^2ds}{t^2}\Big\}^{\frac{1}{2}}\cr
&~~~~~~~~~~~~~~~~~~~~~~~~+\frac{\int_\mathbb{H}|\nabla\rho(z)|\mu(dz)}{\gamma_1-\|\nabla F\|_{\infty}}\Bigg\}.
\end{align}
Since $\lim\limits_{t\rightarrow\infty}\frac{\int_0^t\|Q^{-1}S(s)\|^2ds}{t}<\infty$, then
$\lim\limits_{t\rightarrow\infty}\|Q^{-1}S(t)\|^2<\infty$.
Moreover,
\begin{align*}
\lim\limits_{t\rightarrow\infty}\frac{\Gamma_t}{(\lambda t)^2}
=&\lim\limits_{t\rightarrow\infty}\frac{\int_0^t\Big\{s\int_0^s\|Q^{-1}S(s-r)\|^2
\exp\{-2(\gamma_1-\|\nabla F\|_{\infty})r\}dr\Big\}ds}{t^2}\cr
\leq&\lim\limits_{t\rightarrow\infty}\frac{\int_0^t\|Q^{-1}S(t-s)\|^2\exp\{-2(\gamma_1-\|\nabla F\|_{\infty})s\}ds}{2}\cr
=&\lim\limits_{t\rightarrow\infty}\frac{\int_0^t\|Q^{-1}S(s)\|^2\exp\{2(\gamma_1-\|\nabla F\|_{\infty})s\}ds}
{2\exp\{2(\gamma_1-\|\nabla F\|_{\infty})t\}}\cr
=&\lim\limits_{t\rightarrow\infty}\frac{\|Q^{-1}S(t)\|^2}{4(\gamma_1-\|\nabla F\|_{\infty})}<\infty.
\end{align*}
So there exists a constant $C_1$ independent of $t$ and $\lambda$, such that
\begin{align*}
\Big(\sup\limits_{t\geq1}\frac{\Gamma_t}{t^2}\Big)\vee\Big(\sup\limits_{t\geq1}\frac{\int_0^t\|Q^{-1}S(s)\|^2ds}{t^2}\Big)\leq C_1.
\end{align*}
For $t\geq1$, it follows from
($\ref{36}$) that
\begin{align}
\label{30}
&\|\nabla P_t^1f\|_{\infty}\cr
\leq&2\|f\|_{\infty}\Bigg\{\sqrt{6C_1}\Big\{(\|\nabla F\|_{\infty}^2+1)\frac{\int_\mathbb{H}|z|^2\rho(z)\mu(dz)}{\lambda^2}\Big\}^{\frac{1}{2}}
+\frac{\int_\mathbb{H}|\nabla\rho(z)|\mu(dz)}{\gamma_1-\|\nabla F\|_{\infty}}\Bigg\}.
\end{align}
Therefore,
\begin{align}
\label{31}
&|P_tf(x)-P_tf(y)|\cr
=&|P^1_tf(x)-P^1_tf(y)|+|\mathbb{E}\big\{f(X_t^x)I_{[N_t=0]}\big\}-\mathbb{E}\big\{f(X_t^y)I_{[N_t=0]}\big\}|\cr
\leq&\|\nabla P_t^1f\|_{\infty}|x-y|+2\|f\|_{\infty}\mathbb{P}(N^1_t=0)\cr
\leq& 2\|f\|_{\infty}\Bigg\{\Big\{\sqrt{6C_1}\Big((\|\nabla F\|_{\infty}^2+1)\frac{\int_\mathbb{H}|z|^2\rho(z)\mu(dz)}{\lambda^2}\Big)^{\frac{1}{2}}
+\frac{\int_\mathbb{H}|\nabla\rho(z)|\mu(dz)}{\gamma_1-\|\nabla F\|_{\infty}}\Big\}|x-y|\cr
&~~~~~~~~~~~~~~~~~~+e^{-\lambda t}\Bigg\}.
\end{align}
Step2: Assume $(\bf{H4})$ hold. Making use of ($\ref{43}$), we can construct $\{F_n\}_{n\geq1}\subset C^2_b(\mathbb{H}\rightarrow\mathbb{H})$
such that $F_n\rightarrow F$ as $n\rightarrow\infty$ in pointwise sense and $\sup\limits_{n\geq1}\|\nabla F_n\|_{\infty}\leq \|F\|_{Lip}$.
It follows from ($\ref{31}$) that
\begin{align}
\label{45}
&|P^n_tf(x)-P^n_tf(y)|\cr
\leq&2\|f\|_{\infty}\Bigg\{\Big\{\sqrt{6C_1}\Big((\|\nabla F_n\|_{\infty}^2+1)\frac{\int_\mathbb{H}|z|^2\rho(z)\mu(dz)}{\lambda^2}\Big)^{\frac{1}{2}}
+\frac{\int_\mathbb{H}|\nabla\rho(z)|\mu(dz)}{\gamma_1-\|\nabla F_n\|_{\infty}}\Big\}|x-y|\cr
&~~~~~~~~~~~~~~~~~~+e^{-\lambda t}\Bigg\}\cr
\leq&2\|f\|_{\infty}\Bigg\{\Big\{\sqrt{6C_1}\Big((\|F\|_{Lip}^2+1)\frac{\int_\mathbb{H}|z|^2\rho(z)\mu(dz)}{\lambda^2}\Big)^{\frac{1}{2}}
+\frac{\int_\mathbb{H}|\nabla\rho(z)|\mu(dz)}{\gamma_1-\|F\|_{Lip}}\Big\}|x-y|\cr
&~~~~~~~~~~~~~~~~~~+e^{-\lambda t}\Bigg\},
\end{align}
where $\{P^n_t\}_{t\geq0}$ denotes the transition semigroup of $\{X^n_t\}_{t\geq0}$.
Letting $n\rightarrow\infty$ in ($\ref{45}$),  we get
\begin{align}
\label{46}
|P_tf(x)-P_tf(y)|\leq2\|f\|_{\infty}\Big\{C_2|x-y|+e^{-\lambda t}\Big\},
\end{align}
for some constant $C_2>0$.
Since for $x,y\in\mathbb{H}$,
\begin{align*}
\mathbb{E}|X_t^x-X_t^y|\leq&|S(t)(x-y)|+\mathbb{E}\int_0^t|S(t-s)\big(F(X_s^x)-F(X_s^y)\big)|ds\\
\leq&e^{-\gamma_1t}|x-y|+\int_0^te^{-\gamma_1(t-s)}\|F\|_{Lip}\mathbb{E}|X_s^x-X_s^y|ds,
\end{align*}
then one obtain
\begin{align*}
\mathbb{E}|X_t^x-X_t^y|\leq \exp\{(-\gamma_1+\|F\|_{Lip})t\}|x-y|.
\end{align*}
Combining this with ($\ref{46}$) and using the Markov property, we have for $t>s\geq1$
\begin{align}
\label{32}
&|P_tf(x)-P_tf(y)|\cr
\leq&E|P_sf(X_{t-s}^x)-P_sf(X_{t-s}^y)|\cr
\leq&2\|f\|_{\infty}\Big\{C_2\mathbb{E}|X_{t-s}^x-X_{t-s}^y|+e^{-\lambda s}\Big\}\cr
\leq&2\|f\|_{\infty}\Big\{C_2\exp\{-(\gamma_1-\|F\|_{Lip})(t-s)\}|x-y|+e^{-\lambda s}\Big\}.
\end{align}
Let $t>\frac{(\gamma_1-\|F\|_{Lip}+\lambda)}{\gamma_1-\|F\|_{Lip}}$ and
take $s=\frac{(\gamma_1-\|F\|_{Lip})t}{\gamma_1-\|F\|_{Lip}+\lambda}$ in ($\ref{32}$),
then there exists a constant $C>0$ such that
\begin{align*}
|P_tf(x)-P_tf(y)|\leq C\|f\|_{\infty}(1+|x-y|)\exp\Big\{-\frac{\lambda(\gamma_1-\|F\|_{Lip})t}{\lambda+\gamma_1-\|F\|_{Lip}}\Big\},
\end{align*}
which implies
\begin{align*}
\|P_t(x,\cdot)-P_t(y,\cdot)\|_{Var}\leq C(1+|x-y|)\exp\Big\{-\frac{\lambda(\gamma_1-\|F\|_{Lip})t}{\lambda+\gamma_1-\|F\|_{Lip}}\Big\}.
\end{align*}
The proof is  completed by noting that the inequality trivially holds with a suitable constant $C>0$ for
$t\leq\frac{(\gamma_1-\|F\|_{Lip}+\lambda)}{(\gamma_1-\|F\|_{Lip})}$.\quad \quad $\square$\vspace{3mm}\\

\noindent$\bf{Acknowledgement}$

The authors are very grateful to Professors Zhao Dong, Yong Liu and Fengyu Wang for their valuable discussions and suggestions.

\end{document}